\providecommand{\tabularnewline}{\\}
\numberwithin{equation}{section}
\numberwithin{figure}{section}
  \theoremstyle{plain}
  \newtheorem*{thm*}{\protect\theoremname}
\theoremstyle{plain}
\newtheorem{thm}{\protect\theoremname}[section]
  \theoremstyle{definition}
  \newtheorem{defn}[thm]{\protect\definitionname}
  \theoremstyle{definition}
  \newtheorem*{example*}{\protect\examplename}
  \theoremstyle{plain}
  \newtheorem{lem}[thm]{\protect\lemmaname}
  \theoremstyle{plain}
  \newtheorem{cor}[thm]{\protect\corollaryname}
  \theoremstyle{plain}
  \newtheorem{prop}[thm]{\protect\propositionname}
  \providecommand{\corollaryname}{Corollary}
  \providecommand{\definitionname}{Definition}
  \providecommand{\examplename}{Example}
  \providecommand{\lemmaname}{Lemma}
  \providecommand{\propositionname}{Proposition}
  \providecommand{\theoremname}{Theorem}
\providecommand{\theoremname}{Theorem}
\begin{document}

\title{{\normalsize{}The Asymptotic Behavior of the Codimension Sequence
of Affine $G$-Graded Algebras }}

\author{Yuval Shpigelman}

\address{Department of Mathematics, Technion - Israel Institute of Technology,
Haifa 32000, Israel.}

\email{yuvalshp 'at' tx.technion.ac.il }

\keywords{graded algebras, polynomial identities, representation theory, Hilbert
series, codimension.}
\selectlanguage{american}%
\begin{abstract}
Let $W$ be an affine PI algebra over a field of characteristic zero
graded by a finite group $G.$ We show that there exist $\alpha_{1},\alpha_{2}\in\mathbb{R}$,
$\beta\in\frac{1}{2}\mathbb{Z}$, and $l\in\mathbb{N}$ such that
$\alpha_{1}n^{\beta}l^{n}\leq c_{n}^{G}(W)\leq\alpha_{2}n^{\beta}l^{n}$.
Furthermore, if $W$ has a unit then the asymptotic behavior of $c_{n}^{G}(W)$
is $\alpha n^{\beta}l^{n}$ where $\alpha\in\mathbb{R}$, $\beta\in\frac{1}{2}\mathbb{Z}$,
$l\in\mathbb{N}$. 
\end{abstract}
\maketitle
\selectlanguage{english}%

\section*{Introduction}

\selectlanguage{american}%
Throughout this article $F$ is an algebraically closed field of characteristic
zero and $W$ is an affine associative $F$- algebra graded by a finite
group $G$. We also assume that $W$ is a PI-algebra; i.e., it satisfies
an ordinary polynomial identity.\foreignlanguage{english}{ }In this
article we study $G$-graded polynomial identities of $W$, and in
particular the corresponding $G$-graded codimension sequence.\foreignlanguage{english}{
Let us briefly recall the basic setup. Let $X^{G}$ be a countable
set of variables $\left\{ x_{i,g}:g\in G;i\in\mathbb{N}\right\} $
and let $F\left\langle X^{G}\right\rangle $ be the free algebra on
the set $X^{G}$. Given a polynomial in $F\left\langle X^{G}\right\rangle $
we say that it is a $G$-graded identity of $W$ if it vanishes upon
any admissible evaluation on $W$. That is, a variable $x_{i,g}$
assumes values only from the corresponding homogeneous component $W_{g}$.
The set of all $G$-graded identities is an ideal in the free algebra
$F\left\langle X^{G}\right\rangle $ which we denote by $Id^{G}(W)$.
Moreover, $Id^{G}(W)$ is a $G$-graded $T$-ideal, namely, it is
closed under $G$-graded endomorphisms of $F\left\langle X^{G}\right\rangle $.
Let $R^{G}(W)$ denote the relatively free algebra $F\left\langle X^{G}\right\rangle /Id^{G}(W)$,
and $C_{n}^{G}(W)$ denote the space $P_{n}^{G}/P_{n}^{G}\cap Id^{G}(A)$
where} $P_{n}^{G}$ is the $|G|^{n}\cdot n!$ dimensional $F$-space
spanned by all permutations of the (multilinear) monomials $x_{1,g_{1}},x_{2,g_{2}}\cdot\cdot\cdot x_{n,g_{n}}$
where $g_{i}\in G$. We also define the $n$-th coefficient $c_{n}^{G}(W)$
of the codimension sequence of $W$ by $c_{n}^{G}(W)=dim_{F}\left(C_{n}^{G}(W)\right)$.\foreignlanguage{english}{
In \cite{Aljadeff & Belov2} Aljadeff and Belov showed that for every
affine $G$-graded algebra there exists a finite dimensional $G$-graded
algebra with the same ideal of graded identities, thus with the same
codimension sequence. We denote such algebra by $A_{W}$.}

In the last few years several papers have been written which generalize
results from the theory of ordinary polynomial identities to the $G$
-graded case (e.g. see \cite{Sviridova,Aljadeff & Belov1,Aljadeff & Belov2}).
One of these papers was by \foreignlanguage{english}{E.Aljadeff, A.Giambruno
and D.La Matina who showed, in different collaborations (see \cite{key-G+L,key-A+G,A+G+L}),
that, as in the nongraded case, $\lim_{n\to\infty}\sqrt[n]{c_{n}^{G}(W)}$
exists, and it is a nonnegative integer called the $G$ - graded exponent
of $W$, and denoted by $exp^{G}(W)$. In this paper we expand this
result and prove that, in addition to the ``exponential part'',
there is a ``polynomial part'' to the asymptotics of $c_{n}^{G}(W)$.
More precisely, we prove :}
\selectlanguage{english}%
\begin{thm*}[\textbf{\textit{A}}]
\label{Thm:A} Let $G$ be a finite group, and $W$ an affine $G$-
graded $F$-algebra where $F$ is a field of characteristic 0. Suppose
that $W$ satisfies an ordinary polynomial identity. Then there exist
$\alpha_{1},\alpha_{2}>0$, $\beta\in\frac{1}{2}\mathbb{Z}$, and
$l\in\mathbb{N}$ such that 
\[
\alpha_{1}n^{\beta}l^{n}\leq c_{n}^{G}(W)\leq\alpha_{2}n^{\beta}l^{n}.
\]

\end{thm*}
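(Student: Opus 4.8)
The plan is to reduce to a finite-dimensional $G$-graded algebra and then run the $G$-graded analogue of the Giambruno--Zaicev and Berele analysis of codimension growth, producing the lower and upper bounds separately with one and the same $l$ and $\beta$. First I would use the theorem of Aljadeff and Belov quoted above to replace $W$ by a finite-dimensional $G$-graded $F$-algebra $A=A_W$ with $Id^G(A)=Id^G(W)$; then $c_n^G(W)=c_n^G(A)$ for every $n$, and it suffices to treat $A$. I would fix a graded Wedderburn--Malcev decomposition $A=B_1\oplus\cdots\oplus B_q+J$, where $J=J(A)$ is the graded Jacobson radical and each $B_i$ is $G$-graded-simple (over the algebraically closed $F$, each $B_i$ is, forgetting the grading, a direct sum of $s_i\ge 1$ matrix algebras). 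The graded exponent $l:=\exp^G(W)$ is a nonnegative integer by the result recalled in the introduction, and I would use its structural description: $l=\dim_F(B_{i_1}+\cdots+B_{i_r})$ for a chain with $B_{i_1}JB_{i_2}J\cdots JB_{i_r}\neq 0$ of maximal total dimension; fix such a realizing chain $\mathcal C$. The constant $\beta$ is then defined from $\mathcal C$ by a Giambruno--Zaicev-type expression: an integer part coming from the $r-1$ radical ``bridges'' and from the way the $|G|$-colouring spreads along the chain, minus $\tfrac12$ times, for each block $B_{i_j}$, the gap $\dim_F B_{i_j}-s_{i_j}$; these last corrections are the only non-integral ingredients (they arise from the square-root asymptotics of sums $\sum_\lambda d_\lambda$ over bounded-width shapes), so $\beta\in\tfrac12\mathbb Z$. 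Pinning down the precise $\beta$ and showing that it governs both bounds is the heart of the matter.

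\textbf{The upper bound.} I would pass to the $G$-graded cocharacter: splitting $C_n^G(A)=\bigoplus_{\underline k}C_{\underline k}^G(A)$ by colour content $\underline k=(k_g)_{g\in G}$ with $\sum_g k_g=n$, each $C_{\underline k}^G(A)$ is a $\prod_g S_{k_g}$-module with character $\sum_{\underline\lambda}m_{\underline\lambda}\bigotimes_g\chi_{\lambda^{(g)}}$, so $c_n^G(A)=\sum_{\underline\lambda}m_{\underline\lambda}\prod_g d_{\lambda^{(g)}}$ with $d_{\lambda^{(g)}}=\deg\chi_{\lambda^{(g)}}$. Two inputs drive the estimate: since $A$ is PI, each $\lambda^{(g)}$ with $m_{\underline\lambda}\neq 0$ lies in a fixed hook $H(b,b)$ (the graded form of Kemer's hook theorem); and since $A$ is finite-dimensional, the multiplicities $m_{\underline\lambda}$ are bounded by a fixed polynomial in $n$. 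Bounding each $d_{\lambda^{(g)}}$ by a power of its number of rows and invoking Regev's asymptotic estimates for sums of the products $\prod_g d_{\lambda^{(g)}}$ over bounded-width regions (which inject the square-root, hence half-integer, factors), I would show that the tuples $\underline\lambda$ whose contribution reaches the top exponential order $l^n$ form a thin family, parametrised essentially by how $n$ is split along the chain $\mathcal C$ and among the colours, and conclude $c_n^G(A)\le\alpha_2 n^\beta l^n$. An equivalent route I would keep in reserve is to dominate $A$, modulo graded identities, by a ``minimal'' $G$-graded upper block-triangular matrix algebra assembled from $B_{i_1},\dots,B_{i_r}$ with graded radical bridges, whose cocharacter generating function is explicitly rational and yields the bound at once.

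\textbf{The lower bound.} For the reverse inequality I would produce at least $\alpha_1 n^\beta l^n$ multilinear polynomials that are independent modulo $Id^G(A)$. Fixing homogeneous $a_j\in B_{i_j}$ and $w_j\in J$ with $a_1w_1a_2w_2\cdots w_{r-1}a_r\neq 0$, I would build graded Kemer/Capelli-type polynomials: many families of alternating sets of variables ranging over a basis of the $l$-dimensional space $B_{i_1}\oplus\cdots\oplus B_{i_r}$ (so that the product of the set-sizes contributes the factor $l^n$), linked into the $r$ blocks of $\mathcal C$ by $r-1$ radical variables whose precise role is to let the alternating part ``see'' all of that $l$-dimensional space, with the $|G|$-colouring giving further freedom in assigning colours. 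Optimizing the distribution of the $n$ variables among the blocks and colours (a multinomial optimization whose peak yields the integral part of $\beta$) and invoking the graded analogue of Regev's lower estimates inside each block (where the $-\tfrac12(\dim_F B_{i_j}-s_{i_j})$ corrections enter), one checks that these polynomials are not all identities of $A$ and that their number is $\ge\alpha_1 n^\beta l^n$. Together with the previous step this proves the theorem; the same machinery further shows that if $W$ has a unit the cocharacter generating function is a proper rational function with a unique pole of smallest modulus, which sharpens the estimate to $c_n^G(W)\sim\alpha n^\beta l^n$.

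\textbf{Main obstacle.} I expect the decisive difficulty to be the upper bound, specifically upgrading the crude polynomial bound on $\sum_{\underline\lambda}m_{\underline\lambda}\prod_g d_{\lambda^{(g)}}$ to the exact exponent $n^\beta$. This requires a $G$-graded version of Kemer's theory of the index of a $T$-ideal and of ``fundamental'' algebras, sharp enough to identify exactly which tuples of strips carry a nonzero multiplicity of maximal exponential order and how fast the multiplicities grow there, so that the dominant contribution matches, up to the constants $\alpha_1,\alpha_2$, the lower bound constructed above.
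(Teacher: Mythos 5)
Your route is genuinely different from the paper's, but it has an unresolved gap at its core, and you name it yourself: the upper bound. The inputs you actually cite -- the graded hook theorem and a polynomial bound on the multiplicities $m_{\underline\lambda}$ -- only give $c_n^G(A)\le n^{C}l^{n}$ for \emph{some} constant $C$; they do not identify the exact exponent $\beta$, nor do they show that the same $\beta$ appears in the lower bound. Your plan pins $\beta$ down by a structural formula read off a maximal chain $B_{i_1}J\cdots JB_{i_r}\ne 0$ (an integer part from the radical bridges and colours, minus $\tfrac12\sum_j(\dim_F B_{i_j}-s_{i_j})$), but this second-order asymptotic description is precisely what would have to be proved, and it is not a routine adaptation of Giambruno--Zaicev: it requires knowing exactly which tuples of partitions carry multiplicities of maximal exponential order and how those multiplicities grow, i.e.\ a sharp graded Kemer-type theory that your proposal defers to the ``main obstacle'' paragraph rather than supplies. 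The lower bound has the same problem in mirror image: alternating Kemer/Capelli-type constructions readily give $\alpha_1 n^{\beta'}l^{n}$ for some $\beta'$, but matching $\beta'$ to the true $\beta$ of the upper bound is exactly the unproved point. So as written the argument establishes the exponential part (already known) but not the statement of Theorem A.

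For contrast, the paper sidesteps any structural identification of $\beta$. It shows that the Hilbert series of the relatively free graded algebra $R_k^G(W)$ is a \emph{nice} rational function (by a maximal-counterexample argument using the Specht property and the decomposition of the $T$-ideal into basic algebras, plus the Stanley finiteness argument for $\nicefrac{\mathcal K}{\mathcal J}$), invokes Gordienko's strip condition on the Schur-function coefficients, and then applies the Berele--Regev transfer theorem, which yields $c_{n}^{G}(W)\sim\alpha_m n^{\beta_m}l^{n}$ along each residue class $n\equiv m\ (\mathrm{mod}\ d)$ with $\beta_m\in\tfrac12\mathbb Z$, abstractly and without computing $\beta$. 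The only remaining step is to prove that the graded codimension sequence is eventually non-decreasing (the graded analogue of Giambruno--Zaicev's monotonicity result, done by inserting an extra $e$-graded variable evaluated at $1_S$), which forces all the $\beta_m$ to coincide and gives the two-sided bound. If you want to salvage your approach, you would need to either prove the precise graded second-order asymptotics you postulate, or replace that step by an argument of the Hilbert-series type; as it stands the proposal is an outline whose decisive estimates are assumed rather than established.
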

A conclusion of this theorem is that $\lim_{n\to\infty}log_{n}\left(\frac{c_{n}(W)}{exp(W)^{n}}\right)$
(the power of the ``polynomial part'') exists, and is an integer
or a half-integer.

Furthermore, if $W$ has a unit, we have found the structure of the
codimension sequence's asymptotics.
\begin{thm*}[\textbf{B}]
\label{thm:B} Let $G$ be a finite group, and $W$ an unitary affine
$G$- graded $F$-algebra where $F$ is a field of characteristic
0. Suppose that $W$ satisfies an ordinary polynomial identity. Then
there exist \foreignlanguage{american}{$\beta\in\frac{1}{2}\mathbb{Z}$,
$l\in\mathbb{N}$, and $\alpha\in\mathbb{R}$ }such that 

\[
c_{n}^{G}(W)\sim\alpha n^{\beta}l^{n}.
\]

\end{thm*}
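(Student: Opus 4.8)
The plan is to pass, via the Aljadeff–Belov reduction, to a finite-dimensional $G$-graded algebra $A_W$ with the same graded codimension sequence, and then to analyze the $G$-graded cocharacter through the Hilbert series of the relatively free algebra. First I would record that, since $W$ is unitary, we may take $A_W$ unitary as well, and decompose it (after extending scalars, which is harmless since $F$ is algebraically closed) using the $G$-graded Wedderburn–Malcev principal decomposition $A_W = \bar A \oplus J$, where $J$ is the Jacobson radical and $\bar A$ is a maximal semisimple $G$-graded subalgebra, a direct sum of $G$-graded-simple algebras. The integer $l$ will turn out to be $\exp^G(W)$, computed as in the Aljadeff–Giambruno–La Mattina work as the maximal dimension of a "good" subalgebra $\bar A_{i_1} \oplus J \oplus \bar A_{i_2} \oplus \cdots \oplus \bar A_{i_r}$ admitting nonzero evaluations of alternating polynomials; the point of Theorem B over Theorem A is to pin down the constants $\alpha$ and $\beta$ exactly, not just up to two-sided bounds.

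The key technical device I would use is the translation of the codimension sequence into the language of $S_n$-cocharacters and then into Hilbert series of multigraded components of the relatively free algebra $R^G(W)$. Concretely, $c_n^G(W) = \sum_{\lambda} m_\lambda d_{\lambda}$ where $\lambda$ runs over (tuples of) partitions of $n$, $d_\lambda$ is the corresponding product of hook-length degrees, and the multiplicities $m_\lambda$ are eventually governed by a finite combinatorial rule. The crucial input is that for a unitary algebra the relevant generating function $\sum_n c_n^G(W) t^n$ — or more precisely a suitable refinement tracking the $G$-grading and the $GL$-weights — is a \emph{rational function} whose only poles on its circle of convergence are at $t = 1/l$ with $l = \exp^G(W)$, and these are of a single order. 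This rationality is where the affineness and the finite-dimensionality of $A_W$ are essential: one builds $R^G(W)$ as a finitely generated module over a commutative Noetherian subring (a toric/Veronese-type construction on products of the "good" components), so its Hilbert series is rational with denominator a product of factors $(1 - l t)$; the unit guarantees there is no cancellation or oscillation between competing exponential growth rates coming from different summands.

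The main obstacle is exactly controlling the \emph{order} of the pole at $t = 1/l$ and showing it forces $\beta \in \frac12\mathbb Z$. The half-integrality is the subtle point: it comes from the asymptotics of dimensions of irreducible $S_n$-modules indexed by partitions with a bounded number of rows/columns — a partition $\lambda = (\lambda_1,\dots,\lambda_d)$ of $n$ contributes $d_\lambda \sim c \cdot n^{-(d-1)/2} \cdot (\text{something})^n \cdot \prod(\dots)$, and summing over the $O(n^{d-1})$ relevant $\lambda$ with multiplicity polynomial in the "arms" produces, by a Laplace/saddle-point estimate on the resulting multi-sum, a net power of $n$ that is $k/2$ for an integer $k$ built from $d$ and the degree of the multiplicity polynomial. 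I would carry this out by: (i) writing $c_n^G(W)$ as a finite sum of terms $\sum_{\mu} p(\mu)\, \frac{n!}{\prod(h\text{-lengths})}$ over lattice points $\mu$ in a rational polytope scaled by $n$, with $p$ a quasi-polynomial; (ii) applying the asymptotic estimate for such sums (essentially the Euler–Maclaurin / multidimensional Laplace method, with the exponential maximized on the interior or a face of the polytope — the unit ensures a unique dominant term); (iii) reading off that the resulting asymptotic is $\alpha n^{\beta} l^n$ with $\beta$ the advertised half-integer. Steps (i)–(ii) are where the real work lies; the passage from Theorem A's bounds to Theorem B's genuine asymptotic equivalence rests entirely on the uniqueness of the dominant pole, which is precisely what unitality buys us (in the non-unitary case competing radical-layer contributions of equal exponential order but incompatible polynomial corrections can coexist, which is why only the two-sided estimate survives there).
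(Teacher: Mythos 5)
Your plan diverges from the paper's proof at a point that matters, and its central analytic claim is not correct. You propose to show that the one-variable generating function $\sum_n c_n^G(W)t^n$ is rational with a unique dominant pole at $t=1/l$ of a single order, and to read the asymptotics off that pole. But a rational one-variable series forces an asymptotic of the form (quasi-)polynomial times exponential with an \emph{integer} polynomial degree, whereas $\beta$ is genuinely a half-integer in general (already for ordinary codimensions of matrix algebras one has $c_n(M_k(F))\sim \alpha n^{-(k^2-1)/2}k^{2n}$); so the object that is rational cannot be this single-variable series. What is actually rational (indeed ``nice'' rational, via the Specht property and basic algebras, as in Section \ref{sec:Affine}) is the multivariable Hilbert series $H_W(t_1,\dots,t_k)$ of the relatively free algebra, and the passage from it to codimension asymptotics is exactly the content of Theorem \ref{thm:general}, which only yields asymptotics along arithmetic progressions $n=m+td$ with possibly different constants $\alpha_m$ for different residues $m$ --- this is precisely why, without further input, one only gets the two-sided bounds of Theorem A. Your assertion that ``the unit guarantees there is no cancellation or oscillation'' is the very statement that needs a proof, and your outline supplies no mechanism for it; the obstruction is arithmetic (dependence of the constant on $n \bmod d$), not a competition between radical-layer contributions, so an appeal to a unique saddle point or dominant Wedderburn summand does not address it.

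The paper's mechanism is different and is where unitality genuinely enters: a $G$-graded version of Drensky's decomposition. Using the Poincar\'e--Birkhoff--Witt basis and the substitution $x_{i,e}\mapsto x_{i,e}+1$ (which requires the unit), one shows $R_n^G(W)\cong F[z_1,\dots,z_n]\otimes B_n$ as $GL_n(F)$-modules, where $B_n$ is spanned by products of variables of degree $\ne e$ times commutators; hence $c_n^G(W)=\sum_{s}\binom{n}{s}\delta_s$ with $\delta_s=\dim_F B_s^{(1^s)}$. Pieri's rule transfers the two hypotheses of Theorem \ref{thm:general} from $H_W$ to $H_{B}$ (with exponential base $l-1$, $l=\exp^G(W)$), so $\delta_s\sim\alpha_m s^{\beta_m}(l-1)^s$ along residue classes; the binomial transform, evaluated with a roots-of-unity filter together with the Beckner--Regev estimate for $\sum_{s}\binom{n}{s}s^{\beta}z^{s}$, then smooths out the residue-class dependence, the $t=0$ term dominates, and one obtains the single asymptotic $\alpha n^{\beta}l^{n}$. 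Your steps (i)--(iii) amount to re-deriving a Berele--Regev-type estimate by a Laplace argument applied directly to $c_n^G(W)$; without the tensor decomposition and the resulting binomial smoothing there is no identified reason for a genuine asymptotic rather than bounds, so as written the proposal does not close the gap between Theorem A and Theorem B.
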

Theorems \nameref{Thm:A} and \nameref{thm:B} generalize results
from the theory of ordinary polynomial identities. Indeed, in \cite{Berele & Regev}
Berele and Regev proved the nongraded version of theorems \nameref{Thm:A}
and \nameref{thm:B} for PI algebras satisfying a Capelli identity
(e.g affine algebras, see \cite{Gia & Zai book}). However, in Theorem
A (for ungraded algebras) Berele and Regev made an additional assumption,
namely, that the codimension sequence of the PI algebra is eventually
nondecreasing. Recently, in \cite{Gia&Za nondecreas}, Giambruno and
Zaicev showed that the codimension sequence of any PI algebra is eventually
nondecreasing and therefore the additional assumption mentioned above
can be removed. Here we prove a $G$-graded version of Giambruno and
Zaicev's result and so the assumption on the monotonicity of the $G$-greded
codimension sequence is also unnecessary.

We begin our article (\prettyref{sec:Preliminaries}) by mentioning
some notations from the representation theory of $GL_{n}(F)$ and
$S_{n}$. \prettyref{sec:Affine} is dealing with the case of affine
(not necessarily unitary) $G$ - graded algebras (Theorem A). And
\prettyref{sec:Unitary-Affine} is about the case of unitary affine
$G$-graded algebras (Theorem B). 

\selectlanguage{american}%

\section{\label{sec:Preliminaries} Preliminaries}

In this section we recall some notations and definitions from the
representation theory of $GL_{n}(F)$ and $S_{n}$.
\begin{defn}
A \textit{partition }is a finite sequence of integers $\lambda=(\lambda_{1},...,\lambda_{k})$
such that $\lambda_{1}\geq\cdots\geq\lambda_{k}>0$. The integer $k$
denoted by $h(\lambda)$, and called the \textit{height} of $\lambda$.
The set of all partitions is denoted by $\Lambda$, and the set of
all partitions of height less or equal to $k$ is denoted by $\Lambda^{k}$
.

We say that $\lambda$ is a\textit{ partition of $n\in\mathbb{N}$}
if $\sum_{i=1}^{k}\lambda_{i}=n$. In this case we write $\lambda\vdash n$
or $|\lambda|=n$. 
\end{defn}
It is known (e.g. see \cite{key-Bruce}) that the irreducible $S_{n}$
representations are indexed by partitions of $n$. Denote by $D_{\lambda}$
the $S_{n}$ irreducible representation indexed by $\lambda$, and
by $\chi_{\lambda}$ the corresponding character. By Maschke's theorem
every $S_{n}$ - representation $V$ is completely reducible, so we
can write $V=\bigoplus_{\lambda\vdash n}m_{\lambda}D_{\lambda}$,
and its character 
\[
\chi(V)=\sum_{\lambda\vdash n}m_{\lambda}\chi_{\lambda}.
\]

A $GL_{n}(F)$ - representation $Y=Sp_{F}\{y_{1},y_{2},...\}$ is
called a \textit{polynomial} if there is a set of polynomials $\left\{ f_{i,j}(z_{s,t})|i,j\in\mathbb{N}\right\} \subset F[z_{s,t}|1\leq s,t\leq n]$
such that for every $i$ only finite number of $f_{i,j}(z_{s,t})$'s
are nonzero, and the action of $GL_{n}(F)$ on $Y$ is given by:

\[
P\cdot y_{i}=\sum_{j}f_{i,j}(p_{s,t})y_{j}
\]
for every $P=(p_{s,t})\in GL_{n}(F)$. We say that $Y$ is \textit{$q-$homogeneous}
if all the nonzero $f_{i,j}$'s are homogeneous polynomials of (total)
degree $q$. For $\alpha=(\alpha_{1},...,\alpha_{n})\in\mathbb{N}^{n}$,
one defines the \textit{weight space} of $Y$ associated to $\alpha$
by 
\[
Y^{\alpha}=\left\{ y\in Y|\textrm{diag}(z_{1},...,z_{n})\cdot y=z_{1}^{\alpha_{1}}\cdots z_{n}^{\alpha_{n}}y\right\} .
\]
It is known that 
\[
Y=\bigoplus_{\alpha}Y^{\alpha}.
\]
The series $H_{Y}(t_{1},...,t_{n})=\sum_{\alpha}\left(\dim_{F}Y^{\alpha}\right)t_{1}^{\alpha_{1}}\cdots t_{n}^{\alpha_{n}}$
is called the \textit{Hilbert (or Poincare) series} of $Y$. The Hilbert
series is known to be symmetric in $t_{1},...,t_{n}$, so we recall
an important basis of the space of symmetric series, namely the Schur
functions. For convenience we use the following combinatorial definition:
(Note that although this definition is not the classical one , it
is equivalent to it e.g. see \cite{key-Bruce} ) 

Let $\lambda=(\lambda_{1},..,\lambda_{k})$ be a partition. The \textit{Young
diagram} associated with $\lambda$ is the finite subset of $\mathbb{Z}\times\mathbb{Z}$
defined as $\mathcal{D}_{\lambda}=\left\{ (i,j)\in\mathbb{Z}\times\mathbb{Z}|i=1,..,k\,,\, j=1,..,\lambda_{i}\right\} $.
We may regard $\mathcal{D}_{\lambda}$ as $k$ arrays of boxes where
the top one is of length $\lambda_{1}$, the second of length $\lambda_{2}$,
etc. For example 

\begin{center}
$\mathcal{D}_{(4,3,3,1)}=$ %
\begin{tabular}{|c|c|c|c}
\hline 
~ & ~ & ~ & \multicolumn{1}{c|}{~~}\tabularnewline
\hline 
\selectlanguage{english}%
\selectlanguage{american}%
 & \selectlanguage{english}%
\selectlanguage{american}%
 & \selectlanguage{english}%
\selectlanguage{american}%
 & \selectlanguage{english}%
\selectlanguage{american}%
\tabularnewline
\cline{1-3} 
\selectlanguage{english}%
\selectlanguage{american}%
 & \selectlanguage{english}%
\selectlanguage{american}%
 & \selectlanguage{english}%
\selectlanguage{american}%
 & \selectlanguage{english}%
\selectlanguage{american}%
\tabularnewline
\cline{1-3} 
\selectlanguage{english}%
\selectlanguage{american}%
 & \multicolumn{1}{c}{\selectlanguage{english}%
\selectlanguage{american}%
} & \multicolumn{1}{c}{\selectlanguage{english}%
\selectlanguage{american}%
} & \selectlanguage{english}%
\selectlanguage{american}%
\tabularnewline
\cline{1-1} 
\end{tabular}
\par\end{center}

\textit{A Schur function} $s_{\lambda}\in\mathbb{Z}[t_{1},...,t_{n}]^{S_{n}}$
is a polynomial such that the coefficient of $t_{1}^{a_{1}}\cdots t_{n}^{a_{n}}$
is equal to the number of ways to insert $a_{1}$ ones, $a_{2}$ twos,
... , and $a_{n}$ $n$'s in $\mathcal{D}_{\lambda}$ such that in
every row the numbers are non-decreasing, and in any column the numbers
are strictly increasing. Note that $s_{\lambda}$ is homogenous of
degree $|\lambda|$. 
\begin{example*}
~
\begin{enumerate}
\item If $\lambda$ is a partition of height one, i.e $\lambda=(\lambda_{1})$,
then the corresponding Schur function is 
\[
s_{(\lambda_{1})}(t_{1},...,t_{n})=\sum_{a_{1}+\cdots+a_{n}=\lambda_{1}}t_{1}^{a_{1}}\cdots t_{n}^{a_{n}}.
\]

\item If $\lambda=(2,1)$ and $n=2$ we have 
\[
s_{(2,1)}(t_{1},t_{2})=t_{1}^{2}t_{2}+t_{1}t_{2}^{2}
\]
since the only two ways to set ones and twos in $\mathcal{D}_{(2,1)}$
is %
\begin{tabular}{|c|c|}
\hline 
1 & 1\tabularnewline
\hline 
2 & \multicolumn{1}{c}{\selectlanguage{english}%
\selectlanguage{american}%
}\tabularnewline
\cline{1-1} 
\end{tabular} and %
\begin{tabular}{|c|c|}
\hline 
1 & 2\tabularnewline
\hline 
2 & \multicolumn{1}{c}{\selectlanguage{english}%
\selectlanguage{american}%
}\tabularnewline
\cline{1-1} 
\end{tabular}. 
\end{enumerate}
\end{example*}
The next lemma is easily implied from the definition of the Schur
functions. 
\begin{lem}
\label{lem:schur}~
\begin{enumerate}
\item $s_{\lambda}(t_{1},...,t_{n})=0$ if and only if $h(\lambda)>n$.
\item $s_{\lambda}(t_{1},....,t_{n},0,...,0)=s_{\lambda}(t_{1},...,t_{n})$.
\end{enumerate}
\end{lem}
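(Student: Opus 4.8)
The plan is to argue entirely from the combinatorial description of $s_\lambda$ given above: the coefficient of $t_1^{a_1}\cdots t_n^{a_n}$ in $s_\lambda(t_1,\ldots,t_n)$ is the number of fillings of $\mathcal{D}_\lambda$ using $a_i$ copies of the symbol $i$ that are weakly increasing along each row and strictly increasing down each column; call such a filling \emph{admissible}. Consequently $s_\lambda(t_1,\ldots,t_n)\neq 0$ if and only if $\mathcal{D}_\lambda$ admits at least one admissible filling with entries from $\{1,\ldots,n\}$. Both parts of the lemma then become statements about the existence, respectively the bookkeeping, of admissible fillings.

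For part (1), I would first treat the case $h(\lambda)>n$. The first column of $\mathcal{D}_\lambda$ consists of the boxes $(1,1),\ldots,(h(\lambda),1)$, and in an admissible filling these entries are strictly increasing, hence pairwise distinct; since there are $h(\lambda)>n$ of them but only $n$ available symbols, no admissible filling exists, so $s_\lambda(t_1,\ldots,t_n)=0$. For the converse, assume $h(\lambda)\le n$ and exhibit one admissible filling explicitly: place the symbol $i$ in every box of row $i$. Each row is then constant, hence weakly increasing; and the $j$-th column consists exactly of the boxes $(i,j)$ with $\lambda_i\ge j$, which, because $\lambda_1\ge\cdots\ge\lambda_k$, are the boxes in rows $1,2,\ldots,r$ for some $r\le h(\lambda)\le n$, so that column reads $1,2,\ldots,r$, strictly increasing with all entries $\le n$. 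Thus the filling is admissible and $s_\lambda(t_1,\ldots,t_n)\neq 0$.

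For part (2), I would read $s_\lambda(t_1,\ldots,t_n,0,\ldots,0)$ as the Schur function in $m>n$ variables with $t_{n+1},\ldots,t_m$ specialized to $0$. Every monomial of the $m$-variable Schur function has the form $t_1^{a_1}\cdots t_m^{a_m}$ arising from an admissible filling with entries in $\{1,\ldots,m\}$; after the specialization such a monomial survives precisely when $a_{n+1}=\cdots=a_m=0$, i.e.\ precisely when the underlying admissible filling uses only symbols from $\{1,\ldots,n\}$. The surviving monomials, with their multiplicities, are therefore exactly those of the $n$-variable Schur function, giving $s_\lambda(t_1,\ldots,t_n,0,\ldots,0)=s_\lambda(t_1,\ldots,t_n)$.

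There is essentially no real obstacle here; the one point that deserves a line of care is the column-strictness check in the explicit construction of part (1), namely the observation that the rows meeting a fixed column form an initial segment $\{1,\ldots,r\}$ of row indices, which is immediate from $\lambda$ being weakly decreasing. Everything else is a direct transcription of the definition.
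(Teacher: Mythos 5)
Your proof is correct and is exactly the argument the paper has in mind: the paper omits a proof, noting the lemma is ``easily implied from the definition of the Schur functions,'' and your filling-based verification (first column forces $h(\lambda)\le n$, the row-$i$-gets-symbol-$i$ filling for existence, and the survival criterion under setting $t_{n+1}=\cdots=t_m=0$) is the routine detail being alluded to. Nothing to add.
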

As $\{s_{\lambda}(t_{1},...,t_{n})\}_{\lambda\in\Lambda^{n}}$ inform
a basis of the space of symmetric series in $n$ variables there exist
integers $\left\{ m_{\lambda}\right\} _{\lambda\in\Lambda^{n}}$ such
that 
\[
H_{Y}(t_{1},...,t_{n})=\sum_{\lambda\in\Lambda^{n}}m_{\lambda}s_{\lambda}(t_{1},...,t_{n})
\]
On the other hand, the group $S_{n}$ is embed in $GL_{n}(F)$ via
permutation matrices, hence $Y$ is also an $S_{n}$ representation.
Note that if $P$ is a permutation matrix there is $\sigma\in S_{n}$
such that $\mbox{diag\ensuremath{(z_{1},...,z_{n})\cdot P}}=P\cdot\mbox{diag\ensuremath{(z_{\sigma(1)},...,z_{\sigma(n)})}}$.
Hence for every $y$ in the weight space $Y^{(1^{n})}$ ($1^{n}$
means $1,...,1$ $n$ times) we have 
\[
\mbox{diag\ensuremath{(z_{1},...,z_{n})\cdot Py}}=P\cdot\mbox{diag\ensuremath{(z_{\sigma(1)},...,z_{\sigma(n)})y=z_{1}\cdots z_{n}Py}}
\]
so $Py\in Y^{(1^{n})}$, and we obtain that $Y^{(1^{n})}$ is a sub
$S_{n}$ representation of $Y$. Moreover, by the representation theory
of $GL_{n}(F)$ the $S_{n}$ character of $Y^{(1^{n})}$ is $\sum_{\lambda\vdash n}m_{\lambda}\chi_{\lambda}$
with the same coefficients $m_{\lambda}$ as in the Hilbert series
of $Y$. 

Next we recall some terminology and facts about $G$ - graded relatively
free algebras. Denote $X_{n}^{G}=\{x_{i,g}|g\in G\,,\,1\leq i\leq n\}\subset X^{G}$,
and consider the free algebra $F\left\langle X_{n}^{G}\right\rangle $
which is a subalgebra of $F\left\langle X^{G}\right\rangle $. We
equip $F\left\langle X_{n}^{G}\right\rangle $ with a $G$-grading
by setting the homogenous degree of a monomial $x_{i_{1},g_{i_{1}}}\cdots x_{i_{l},g_{i_{l}}}$
to be $g_{i_{1}}\cdots g_{i_{l}}\in G$. If we suppose that $\mathcal{I}$
is a $G$ - graded $T$-ideal of the free algebra $F\left\langle X_{n}^{G}\right\rangle $,
then it is easy to see that the grading on $F\left\langle X_{n}^{G}\right\rangle $
induces (naturally) a grading on $\nicefrac{F\left\langle X_{n}^{G}\right\rangle }{\mathcal{I}}$,
giving us the corresponding $T$-ideal of $G$-graded identities $Id^{G}\left(\nicefrac{F\left\langle X_{n}^{G}\right\rangle }{\mathcal{I}}\right)\subset F\left\langle X^{G}\right\rangle $.\\
We say that $\mathcal{I}$ is PI if $Id^{G}\left(\nicefrac{F\left\langle X_{n}^{G}\right\rangle }{\mathcal{I}}\right)\ne\{0\}$
i.e.$\nicefrac{F\left\langle X_{n}^{G}\right\rangle }{\mathcal{I}}$
is a PI algebra. Note that the $T$-ideal $F\left\langle X_{n}^{G}\right\rangle \cap Id^{G}\left(\nicefrac{F\left\langle X_{n}^{G}\right\rangle }{\mathcal{I}}\right)$
is exactly $\mathcal{I}$. 

For every $g\in G$ the group $GL_{n}(F)$ acts linearly on the $n$-dimensional
vector space $Sp_{F}\{x_{1,g},...,x_{n,g}\}$. Given these actions,
we define a polynomial $GL_{n}(F)$ representation structure on $F\left\langle X_{n}^{G}\right\rangle $
via $P(x_{i_{1},g_{1}}\cdots x_{i_{l},g_{l}})=P(x_{i_{1},g_{1}})\cdots P(x_{i_{l},g_{l}})$
for every $P\in GL_{n}(F)$. This structure induces (naturally) a
structure of $GL_{n}(F)$ representation on the relatively free algebra
$\nicefrac{F\left\langle X_{n}^{G}\right\rangle }{\mathcal{I}}$,
giving us the corresponding Hilbert series $H_{\nicefrac{F\left\langle X_{n}^{G}\right\rangle }{\mathcal{I}}}(t_{1},...,t_{n})$.
We recall from \cite{Aljadeff & Belov1} (lemma 2.10 and 2.11) the
following properties of that kind of Hilbert series:
\begin{lem}
\label{lem:hilbert-prop}Let $\mathcal{J}_{1},\mathcal{J}_{2}$ be
$G$ - graded $T$-ideals of $F\left\langle X_{n}^{G}\right\rangle $,
then
\begin{enumerate}
\item $H_{\nicefrac{F\left\langle X_{n}^{G}\right\rangle }{\mathcal{J}_{1}\cap\mathcal{J}_{2}}}=H_{\nicefrac{F\left\langle X_{n}^{G}\right\rangle }{\mathcal{J}_{1}}}+H_{\nicefrac{F\left\langle X_{n}^{G}\right\rangle }{\mathcal{J}_{2}}}-H_{\nicefrac{F\left\langle X_{n}^{G}\right\rangle }{\mathcal{J}_{1}+\mathcal{J}_{2}}}.$
\item If $\mathcal{J}_{1}\subset\mathcal{J}_{2}$ then 
\[
H_{\nicefrac{F\left\langle X_{n}^{G}\right\rangle }{\mathcal{J}_{1}}}=H_{\nicefrac{F\left\langle X_{n}^{G}\right\rangle }{\mathcal{J}_{2}}}+H_{\nicefrac{\mathcal{J}_{2}}{\mathcal{J}_{1}}}.
\]

\end{enumerate}
\end{lem}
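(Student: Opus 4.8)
The plan is to derive both identities from a single bookkeeping principle: the Hilbert series is additive along short exact sequences of multigraded vector spaces. The starting observation is that $F\langle X_n^G\rangle$ is a polynomial $GL_n(F)$-representation whose weight spaces are finite dimensional --- for a fixed $\alpha\in\mathbb{N}^n$ the monomials of multidegree $\alpha$ number exactly $\binom{|\alpha|}{\alpha_1,\dots,\alpha_n}|G|^{|\alpha|}$, since the sequence of lower indices is a rearrangement of the multiset determined by $\alpha$ and the $G$-labels at the $|\alpha|$ positions are arbitrary. A $G$-graded $T$-ideal $\mathcal{J}$ of $F\langle X_n^G\rangle$ is stable under all graded endomorphisms, in particular under the diagonal torus $\mathrm{diag}(z_1,\dots,z_n)\subset GL_n(F)$; hence $\mathcal{J}=\bigoplus_\alpha\bigl(\mathcal{J}\cap F\langle X_n^G\rangle^\alpha\bigr)$ is itself multigraded, and the same then holds for $\mathcal{J}_1\cap\mathcal{J}_2$, for $\mathcal{J}_1+\mathcal{J}_2$, for every quotient $F\langle X_n^G\rangle/\mathcal{J}$, and for the subquotient $\mathcal{J}_2/\mathcal{J}_1$. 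Each of these is thus a polynomial $GL_n(F)$-representation with finite-dimensional weight spaces and so has a well-defined Hilbert series, and throughout I will use that an exact sequence $0\to U\to V\to W\to 0$ of multigraded spaces with multigraded maps gives $H_V=H_U+H_W$ by comparing dimensions weight space by weight space.

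For part (2), assuming $\mathcal{J}_1\subset\mathcal{J}_2$, the canonical surjection $F\langle X_n^G\rangle/\mathcal{J}_1\twoheadrightarrow F\langle X_n^G\rangle/\mathcal{J}_2$ is a morphism of multigraded $GL_n(F)$-representations with kernel $\mathcal{J}_2/\mathcal{J}_1$. Applying additivity of the Hilbert series to the resulting short exact sequence gives at once
\[
H_{F\langle X_n^G\rangle/\mathcal{J}_1}=H_{F\langle X_n^G\rangle/\mathcal{J}_2}+H_{\mathcal{J}_2/\mathcal{J}_1}.
\]

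For part (1), write $H$ for the Hilbert series of $F\langle X_n^G\rangle$ itself. For any multigraded ideal $\mathcal{J}$, the sequence $0\to\mathcal{J}\to F\langle X_n^G\rangle\to F\langle X_n^G\rangle/\mathcal{J}\to 0$ yields $H_{F\langle X_n^G\rangle/\mathcal{J}}=H-H_{\mathcal{J}}$ (a coefficientwise difference of power series with nonnegative coefficients). Next, the second isomorphism theorem provides a multigraded isomorphism $(\mathcal{J}_1+\mathcal{J}_2)/\mathcal{J}_2\cong\mathcal{J}_1/(\mathcal{J}_1\cap\mathcal{J}_2)$, which by the same additivity gives $H_{\mathcal{J}_1+\mathcal{J}_2}-H_{\mathcal{J}_2}=H_{\mathcal{J}_1}-H_{\mathcal{J}_1\cap\mathcal{J}_2}$, that is, $H_{\mathcal{J}_1\cap\mathcal{J}_2}=H_{\mathcal{J}_1}+H_{\mathcal{J}_2}-H_{\mathcal{J}_1+\mathcal{J}_2}$. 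Substituting $\mathcal{J}\in\{\mathcal{J}_1\cap\mathcal{J}_2,\ \mathcal{J}_1,\ \mathcal{J}_2,\ \mathcal{J}_1+\mathcal{J}_2\}$ into $H_{F\langle X_n^G\rangle/\mathcal{J}}=H-H_{\mathcal{J}}$ and combining the four relations produces the claimed inclusion--exclusion identity.

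The only genuinely substantive point, and the one I expect to require the most care, is the multihomogeneity of $G$-graded $T$-ideals, i.e. that $\mathcal{J}=\bigoplus_\alpha(\mathcal{J}\cap F\langle X_n^G\rangle^\alpha)$; this is where $\mathrm{char}\,F=0$ enters (equivalently, torus-invariance of $T$-ideals together with complete reducibility), and it is what guarantees that all the Hilbert series above are even defined. Once this is granted, the remainder is purely formal manipulation of dimensions of finite-dimensional weight spaces. Since the statement is Lemmas~2.10 and 2.11 of \cite{Aljadeff & Belov1}, one may alternatively simply cite it, the sketch above recording the short argument.
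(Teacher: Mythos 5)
Your proposal is correct, but note that the paper itself offers no proof of this lemma: it is quoted from Aljadeff and Kanel-Belov (Lemmas 2.10 and 2.11 of [AB1]), so the comparison is between your self-contained argument and a bare citation. Your argument is sound and is essentially the standard one behind the cited facts: a $G$-graded $T$-ideal is stable under the graded endomorphisms $x_{i,g}\mapsto z_i x_{i,g}$, hence (since $F$ is infinite, by a Vandermonde argument) multihomogeneous, so $\mathcal{J}_1$, $\mathcal{J}_2$, and therefore also $\mathcal{J}_1\cap\mathcal{J}_2$, $\mathcal{J}_1+\mathcal{J}_2$ and all relevant quotients decompose into finite-dimensional weight spaces; after that everything is dimension bookkeeping, with part (2) coming from the short exact sequence $0\to\mathcal{J}_2/\mathcal{J}_1\to F\langle X_n^G\rangle/\mathcal{J}_1\to F\langle X_n^G\rangle/\mathcal{J}_2\to 0$ and part (1) from $H_{F\langle X_n^G\rangle/\mathcal{J}}=H-H_{\mathcal{J}}$ combined with the multigraded second isomorphism theorem $(\mathcal{J}_1+\mathcal{J}_2)/\mathcal{J}_2\cong\mathcal{J}_1/(\mathcal{J}_1\cap\mathcal{J}_2)$. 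Two minor remarks: what the multihomogeneity step really needs is only that $F$ is infinite, not characteristic zero or complete reducibility per se (though in this paper characteristic zero is assumed anyway); and once $\mathcal{J}_1,\mathcal{J}_2$ are known to be multigraded, their sum and intersection are multigraded simply as sums and intersections of multigraded subspaces, with no need to invoke that they are again $T$-ideals. Your closing alternative, to cite the reference outright, is precisely what the paper does; supplying the short argument as you did makes the lemma self-contained at negligible cost.
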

In our case recall that $Id_{n}^{G}(W):=F\left\langle X_{n}^{G}\right\rangle \cap Id^{G}(W)$
is $G$-graded $T$-ideal which is PI for every $n\in\mathbb{N}$,
and let 
\[
\left\{ R_{n}^{G}(W)=\nicefrac{F\left\langle X_{n}^{G}\right\rangle }{Id_{n}^{G}(W)}\right\} _{n\in\mathbb{N}}
\]
be a collection of affine relatively free $G$-graded algebras of
$W$, and consider the corresponding Hilbert series $H_{R_{n}^{G}(W)}(t_{1},...,t_{n})=\sum_{\lambda}m_{\lambda}^{(n)}s_{\lambda}(t_{1},...,t_{n})$.
We can show that we need only one set of coefficients $\left\{ m_{\lambda}\right\} _{\lambda\in\Lambda}$
to describe $H_{R_{n}^{G}(W)}(t_{1},...,t_{n})$ for every $n$. Indeed,
it is easy to see that by definition diag$(z_{1},...,z_{n})\cdot x_{i_{1},g_{1}}\cdots x_{i_{l},g_{l}}=z_{i_{1}}\cdots z_{i_{l}}x_{i_{1},g_{1}}\cdots x_{i_{l},g_{l}}$,
hence the weight space $\left(R_{n}^{G}(W)\right)^{\alpha}$ is the
span of all the monomials in $R_{n}^{G}(W)$ with $\alpha_{i}$ appearances
of variables from the set $\{x_{i,g}|g\in G\}$ for every $i=1,...,n$.
Therefore, we can write $R_{r}^{G}(W)=\bigoplus\left(R_{m}^{G}(W)\right)^{\alpha}$
for every $r<m$, where the summing is over all the $\alpha\in\mathbb{N}^{m}$
such that $\alpha=(\alpha_{1},...,\alpha_{r},0,...,0)$. Thus, we
get the Hilbert series of $R_{r}^{G}(W)$ by putting $t_{r+1}=\cdots=t_{m}=0$
in the Hilbert series of $R_{m}^{G}(W)$, i.e. 
\[
H_{R_{r}^{G}(W)}(t_{1},...,t_{r})=\sum_{\lambda\in\Lambda^{r}}m_{\lambda}^{(r)}s_{\lambda}(t_{1},...,t_{r})=\sum_{\lambda\in\Lambda^{m}}m_{\lambda}^{(m)}s_{\lambda}(t_{1},...,t_{r},0,...0).
\]
Moreover, by \prettyref{lem:schur} we get
\[
\sum_{\lambda\in\Lambda^{r}}m_{\lambda}^{(r)}s_{\lambda}(t_{1},...,t_{r})=\sum_{\lambda\in\Lambda^{r}}m_{\lambda}^{(m)}s_{\lambda}(t_{1},...,t_{r})+\sum_{\lambda\in\Lambda^{m}\backslash\Lambda^{r}}m_{\lambda}^{(m)}\cdot0,
\]
hence for every $r<m$ the coefficients $m_{\lambda}^{(r)}$ and $m_{\lambda}^{(m)}$
are the same for every $\lambda\in\Lambda^{r}$. Making it possible
to denote $m_{\lambda}^{(n)}=m_{\lambda}$, and write for every $n\in\mathbb{N}$
\[
H_{R_{n}^{G}(W)}(t_{1},...,t_{n})=\sum_{\lambda\in\Lambda^{n}}m_{\lambda}s_{\lambda}(t_{1},...,t_{n}).
\]
Remember that $A_{W}$ is a finite dimensional $G$-graded algebra
such that $Id^{G}(A_{W})=Id^{G}(W)$, thus with the same $m_{\lambda}$.
By \cite{Gordienko} (Lemma 1 and Lemma 7) $m_{\lambda}=0$ if $\lambda\notin\Lambda^{k}$
where $k=dim_{F}\left(A_{W}\right)$, so all the nonzero coefficients
$m_{\lambda}$ appear in the series $H_{R_{k}^{G}(W)}(t_{1},...,t_{k})$.
Notice that for every $n$ the weight space $R_{n}^{G}(W)^{(1^{n})}$
is exactly $C_{n}^{G}(W)$, thus $c_{n}^{G}(W)=\sum_{\lambda\vdash n}m_{\lambda}d_{\lambda}$
where $d_{\lambda}=dim_{F}D_{\lambda}$. The conclusion is that the
$G$ - graded codimension sequence of $W$ is determined by a single
Hilbert series $H_{R_{k}^{G}(W)}(t_{1},...,t_{k})$. We denote this
series by $H_{W}(t_{1},...,t_{k})$.

As mention in the introduction, Berele and Regev proved versions of
Theorems \nameref{Thm:A} and \nameref{thm:B} for (nongraded) affine
PI algebras. Their proofs rely on a result about symmetric series
which we will also use. Before stating this result we need a definition.
\begin{defn}
A series $H\left(t_{1},...,t_{k}\right)$ is called \textit{rational
function} if we can write it as $\frac{f(t_{1},...,t_{k})}{g(t_{1},...,t_{k})}$
where $f,g$ are polynomials, moreover a rational function $\frac{f(t_{1},...,t_{k})}{g(t_{1},...,t_{k})}$
is called \textit{nice} if $g(t_{1},...,t_{k})=\prod_{\alpha\in\Delta}(1-t_{1}^{\alpha_{1}}\cdots t_{k}^{\alpha_{k}})$
where $\Delta$ is some subset of $\mathbb{N}^{k}$.\end{defn}
\begin{thm}[\cite{Berele & Regev}]
\label{thm:general} Let $H(t_{1},...,t_{k})=\sum_{\lambda\in\Lambda}a_{\lambda}s_{\lambda}(t_{1},...,t_{k})$
be a series, and denote $y_{n}^{H}=\sum_{\lambda\vdash n}a_{\lambda}d_{\lambda}$
where $d_{\lambda}$ is the dimension of the $\lambda$'s irreducible
representation of $S_{n}$, and $a_{\lambda}\in\mathbb{N}$. We assume
that 
\begin{enumerate}
\item There is an integer $r$ such that $a_{\lambda}=0$ for every partition
$\lambda$ satisfying $\lambda_{r+1}\geq M$ where $M$ is a constant
which depends only on $r$.
\item $H(t_{1},...,t_{k})$ is nice rational function.
\end{enumerate}
Then there is an integer $d$ such that for every $0\leq m\leq d-1$
there are constants $\beta_{m}\in\frac{1}{2}\mathbb{Z}$, and $\alpha_{m}\geq0$.
such that
\[
y_{n_{m,t}}^{H}\sim\alpha_{m}n_{m,t}^{\beta_{m}}l^{n_{m,t}}\,\,\,\,\,\,\,;\,\,\,\,\,\, t\to\infty
\]
where $n_{m,t}=m+td$, and $l$ is the minimal integer satisfying
condition 1. 
\end{thm}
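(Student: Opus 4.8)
The plan is to reduce the asymptotics of $y_n^H$ to a saddle-point (Laplace) evaluation of a sum of the form $\sum_{\mu}P(\mu)\,d_\mu$, where $P$ is a quasi-polynomial weight and $\mu$ runs over partitions with at most $l$ rows, following the strategy of Berele and Regev. The first step uses hypothesis (1). Since $H$ is a series in $t_1,\dots,t_k$ and $s_\lambda(t_1,\dots,t_k)=0$ once $h(\lambda)>k$, only the $a_\lambda$ with $h(\lambda)\le k$ affect $H$, so we may assume the rest vanish; together with hypothesis (1) for the minimal $r=l$ (put $s:=M-1$), every $\lambda$ with $a_\lambda\ne0$ splits uniquely as $\lambda=\mu\sqcup\nu$ with $\mu=(\lambda_1,\dots,\lambda_l)$ and the tail $\nu=(\lambda_{l+1},\dots,\lambda_k)$ contained in a $(k-l)\times s$ box. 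Thus $\nu$ ranges over a fixed finite set and $|\nu|$ is bounded, and a hook-length computation shows that, once all parts of $\mu$ are large, $d_{\mu\sqcup\nu}\sim\frac{d_\nu}{|\nu|!}\,|\lambda|^{\,|\nu|}\,d_\mu$. It therefore suffices, for each fixed tail $\nu$, to find the asymptotics of $\sum_{\mu\vdash n-|\nu|,\ h(\mu)\le l}a_{\mu\sqcup\nu}\,d_\mu$.

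Next I would use hypothesis (2) to pin down the $a_\lambda$. Writing $H=f/\prod_{\alpha\in\Delta}(1-t^\alpha)$, the coefficient $c(\beta)$ of a monomial $t^\beta$ in the power-series expansion of $H$ is, by the elementary theory of such rational series, a quasi-polynomial in $\beta$: there is a sublattice $L\le\mathbb Z^k$ so that on each coset $c(\beta)$ agrees with a fixed polynomial once $\beta$ lies deep in the cone spanned by $\Delta$. Now $a_\lambda$ is recovered from the $c(\beta)$ by inverting the Schur-to-monomial (Kostka) transition; for partitions of height $\le k$ this inverse matrix is banded --- $(K^{-1})_{\mu\lambda}=0$ unless $\mu-\lambda$ is bounded in terms of $k$, as one sees from the special-rim-hook description --- so $a_\lambda$ is a fixed finite integer combination of shifts of the quasi-polynomial $c$, hence itself quasi-polynomial in $(\lambda_1,\dots,\lambda_l)$ (for $\lambda$ away from the walls $\lambda_i=\lambda_{i+1}$, where $c$ is genuinely quasi-polynomial). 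Let $d$ be a common period over all the tails $\nu$; this is the $d$ of the statement, and along each residue class $n\equiv m\pmod d$ we have $a_{\mu\sqcup\nu}=P_{\nu,m}(\mu)$ with $P_{\nu,m}$ a fixed polynomial.

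Finally comes the asymptotic evaluation. For $\mu\vdash n'$ (with $n':=n-|\nu|$), writing $\mu_i=n'/l+\delta_i$, $\sum_i\delta_i=0$, Regev's asymptotic form of the hook-length formula gives, uniformly for $\|\delta\|=o(n')$, \[ d_\mu=c_l\,l^{\,n'}\,(n')^{-(l-1)/2}\cdot\frac{W(\delta)}{(n'/l)^{\binom{l}{2}}}\cdot e^{-\frac{l}{2n'}\|\delta\|^2}\,(1+o(1)), \] where $c_l>0$ and $W(\delta)$ is, to leading order, the Vandermonde determinant in the $\delta_i$, homogeneous of degree $\binom{l}{2}$. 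Inserting this and $P_{\nu,m}$ for $a_{\mu\sqcup\nu}$ and summing over the rank-$(l-1)$ lattice of admissible $\delta$ produces a Riemann sum; after the rescaling $\delta=\sqrt{n'}\,u$ (Jacobian $(n')^{(l-1)/2}$) it converges to a Gaussian integral $\int_{\sum u_i=0}R(u)\,e^{-\frac{l}{2}\|u\|^2}\,du$ over the hyperplane, with $R$ the fixed polynomial built from $P_{\nu,m}$ and $W$. Collecting the powers of $n'$ --- the factor $(n')^{-(l-1)/2}$ from Regev's formula is exactly what can produce a half-integer exponent --- gives $\sum_\mu a_{\mu\sqcup\nu}d_\mu\sim\alpha_{\nu,m}\,n^{\beta_{\nu,m}}l^{\,n}$ with $\beta_{\nu,m}\in\frac12\mathbb Z$; summing over the finitely many tails and keeping the dominant term yields $y^H_n\sim\alpha_m n^{\beta_m}l^{\,n}$ along $n\equiv m\pmod d$, with $\beta_m=\max_\nu\beta_{\nu,m}$, $\alpha_m\ge0$ (and $\alpha_m=0$ exactly when every relevant integral vanishes, in which case $y^H_{m+td}=0$ for $t\gg0$).

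The main obstacle is making this last step rigorous. Regev's formula holds only for $\|\delta\|$ not too large, so to legitimize passing to the Gaussian integral one must separately bound the contribution of $\mu$ with $\|\delta\|$ large, via crude estimates $d_\mu\le(l-\varepsilon)^{n'}(\cdots)$ valid once $\delta/\sqrt{n'}$ is bounded away from the origin, and check these tails are negligible against $n^{\beta_m}l^n$. One must also verify that the leading term of the quasi-polynomial $a_\lambda$ is constant along a fixed coset modulo $d$, so that $\alpha_m,\beta_m$ are well defined, and that when its Gaussian integral happens to vanish the next-lower term genuinely takes over --- this is the mechanism by which $\beta_m$ can depend on $m$. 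Choosing a single $d$ serving all tails and cosets at once, and verifying positivity of the surviving constants, is then routine.
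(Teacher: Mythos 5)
You should note at the outset that the paper contains no proof of this statement: it is imported verbatim from Berele and Regev and used as a black box, so your sketch has to be judged as a standalone reconstruction of their argument. In outline it does follow their strategy (splitting $\lambda$ into an $l$-row head and a bounded tail via hypothesis (1), extracting the $a_\lambda$ from the monomial coefficients of the nice rational series, then Regev's asymptotic formula for $d_\mu$ and a Laplace-type evaluation), and several of your intermediate claims check out, e.g. the ratio $d_{\mu\sqcup\nu}\sim\frac{d_\nu}{|\nu|!}|\lambda|^{|\nu|}d_\mu$ and the form of the balanced-shape asymptotics for $d_\mu$. But as written the proposal has gaps that are not mere bookkeeping.

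The central gap is the quasi-polynomiality step. For $H=f/\prod_{\alpha\in\Delta}(1-t^\alpha)$ the coefficient function $c(\beta)$ is a shifted sum of vector partition functions, and by Sturmfels' theory these are only \emph{piecewise} quasi-polynomial, on the chambers of a fan refining the cone generated by $\Delta$; there is no single quasi-polynomial valid ``deep in the cone.'' Hence $a_\lambda$ --- which is indeed a bounded signed combination of values of $c$ (this is cleaner to see by multiplying $H$ by the Vandermonde and reading off alternant coefficients than by appealing to bandedness of $K^{-1}$) --- is only piecewise quasi-polynomial, and your insertion of a single polynomial $P_{\nu,m}$ over the whole summation region is unjustified. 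Worse, the Laplace analysis concentrates on balanced heads $\lambda_1\approx\cdots\approx\lambda_l$, a direction that can lie on, or repeatedly cross, chamber walls; your proviso ``away from the walls $\lambda_i=\lambda_{i+1}$'' names the wrong walls and in any case would excise part of the region carrying the mass. The chamber-by-chamber decomposition is precisely where the period $d$, the dependence of $\beta_m$ on the residue $m$, and the possibility $\alpha_m=0$ originate, so it cannot be waved through. Beyond that, the step you yourself call ``the main obstacle'' --- tail bounds outside the validity range of Regev's formula, uniform error control when the leading Gaussian integral vanishes so that a genuinely lower-order term takes over, the heads with fewer than $l$ unbounded rows, and the use of $a_\lambda\in\mathbb{N}$ together with minimality of $r$ to pin the exponential base at $l$ and get $\alpha_m\geq0$ --- is exactly the analytic heart of the Berele--Regev theorem and is left open. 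So the proposal is a faithful plan of their proof, but not yet a proof.
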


\section{\label{sec:Affine}Affine G-graded Algebras}

In this section we prove Theorem A. Our first step is to show that
$H_{W}(t_{1},...,t_{k})=\sum_{\lambda\in\Lambda^{k}}m_{\lambda}s_{\lambda}$
satisfies the conditions of \prettyref{thm:general}, hence $y_{n}^{H_{W}}=c_{n}^{G}(W)$
satisfies the conclusion of the theorem. Recall that according to
the first condition there is an integer $r$ such that $m_{\lambda}=0$
for every partition $\lambda$ satisfying $\lambda_{r+1}\geq M$ where
$M$ is a constant which depends only on $r$. In \cite{Gordienko}
A.S Gordienko (see Lemma 1 and Lemma 7) showed that this condition
holds for finite dimensional $G$-graded PI $F$-algebras, however
we know (see the introduction) that there is a finite dimensional
$G$-graded algebra $A_{W}$ such that $H_{W}=H_{R_{k}^{G}(A_{W})}$,
so indeed $H_{W}$ satisfies the first condition of \prettyref{thm:general}.
Let us show that the second condition of \prettyref{thm:general}
holds, namely that $H_{W}$ is nice rational. In \cite{Aljadeff & Belov1}
Aljadeff and Belov proved that the Hilbert series of $\nicefrac{F\left\langle X_{k}^{G}\right\rangle }{\mathcal{I}}$
is rational for every $G$ - graded $T$ - ideal $\mathcal{I}$ of
$F\left\langle X_{k}^{G}\right\rangle $ which is PI. In fact, with
slight changes in their proof, we can prove that the Hilbert series
is also nice. 
\begin{thm}
Let $\mathcal{I}$ be a $G$ - graded $T$ - ideal of $F\left\langle X_{k}^{G}\right\rangle $
which is PI then the Hilbert series of $\nicefrac{F\left\langle X_{k}^{G}\right\rangle }{\mathcal{I}}$
is a nice rational function.\end{thm}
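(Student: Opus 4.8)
The plan is to follow Aljadeff--Belov's proof of rationality of $H_{\nicefrac{F\langle X_k^G\rangle}{\mathcal{I}}}$ and observe that the construction can be arranged so that the denominator has the prescribed form $\prod_{\alpha\in\Delta}(1-t_1^{\alpha_1}\cdots t_k^{\alpha_k})$. First I would recall that since $\mathcal{I}$ is PI, by the Aljadeff--Belov representability theorem there is a finite dimensional $G$-graded algebra $A$ with $Id^G(A)=Id^G\!\left(\nicefrac{F\langle X_k^G\rangle}{\mathcal I}\right)$, so $\mathcal I = F\langle X_k^G\rangle\cap Id^G(A)$ and the relatively free algebra may be taken to be the $G$-graded relatively free algebra of a finite dimensional algebra. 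The key structural input (again from \cite{Aljadeff & Belov1}) is that after suitable reductions one may embed the relatively free algebra into (or compare its Hilbert series with that of) an algebra built from a finite dimensional semisimple $G$-graded algebra together with a nilpotent radical, and the Hilbert series of such an algebra is governed by counting monomials of the form $w_0 c_1 w_1 c_2 \cdots c_m w_m$ where the $c_i$ are ``bridge'' elements from the radical (there are boundedly many such configurations since the radical is nilpotent) and the $w_i$ range over a free-like object on the semisimple part.

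The main technical step is therefore to show that each of these building-block Hilbert series is \emph{nice}. For the semisimple part, the relatively free $G$-graded algebra of a finite dimensional $G$-graded semisimple algebra has a Hilbert series computable as a sum over ``generic'' constructions, and the finite generation of the relevant algebra of $GL_k$-invariants (or, more concretely, the explicit PBW-type basis of the relatively free algebra in terms of generic elements) yields a presentation of the Hilbert series with denominator a product of terms $1-t^{\alpha}$, one for each generator of the appropriate commutative monoid of multidegrees. I would make this precise by exhibiting, as Aljadeff and Belov do, a finite set of generators for the relevant subalgebra and noting that the multidegrees of a homogeneous generating set give exactly the $\alpha$'s appearing in $\Delta$; the point is that one never needs a denominator factor that is not of pure binomial type $1-$monomial, because the ambient object is a (multi)graded module over a polynomial-like ring generated in finitely many multidegrees.

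Then I would assemble the pieces using Lemma \ref{lem:hilbert-prop}: the relatively free algebra of $A$ sits in an exact sequence / inclusion-exclusion relating it to the relatively free algebra of $A/J(A)$ (the semisimple quotient) and to pieces involving the radical, and by part (1) and (2) of that lemma the Hilbert series of $\nicefrac{F\langle X_k^G\rangle}{\mathcal I}$ is an integer combination of the building-block series. Since a finite sum of nice rational functions can be put over the common denominator obtained by multiplying all the individual $\prod(1-t^\alpha)$ denominators --- which is again of the form $\prod_{\alpha\in\Delta}(1-t_1^{\alpha_1}\cdots t_k^{\alpha_k})$ --- niceness is preserved under the operations we use, and we conclude.

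The step I expect to be the main obstacle is verifying that Aljadeff and Belov's rationality argument really does produce, or can be massaged to produce, denominators that are products of binomials $1-(\text{monomial})$ rather than more general polynomials: one has to trace through their reduction to the case of a relatively free algebra over a semisimple algebra, make sure the inductive step on the nilpotency index of the radical only introduces finitely many new binomial factors (coming from the bounded number of radical insertions), and confirm that no denominator factor of the shape $1 - (t^{\alpha} + t^{\beta})$ or similar ever appears. Handling the grading by $G$ adds bookkeeping (the ``generic'' elements now carry a $G$-degree as well as a $GL_k$-multidegree) but does not change the shape of the argument. I would therefore present the proof as ``the same as \cite{Aljadeff & Belov1} with the following modification,'' isolating precisely the lemma in their paper where a general rational denominator could appear and replacing it with the sharper niceness claim.
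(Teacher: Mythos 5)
Your proposal is a plan for a proof rather than a proof: the decisive point --- that the denominators produced by the Aljadeff--Belov rationality argument are (or can be arranged to be) products of binomials $1-t_{1}^{\alpha_{1}}\cdots t_{k}^{\alpha_{k}}$ --- is exactly the step you defer, announcing that you would ``isolate the lemma where a general denominator could appear and replace it with the sharper niceness claim'' without identifying that lemma or proving the sharper claim. Moreover, the structural picture you describe (a direct decomposition of the relatively free algebra of $A$ into pieces $w_{0}c_{1}w_{1}\cdots c_{m}w_{m}$ indexed by boundedly many radical insertions over a ``free-like'' semisimple part, each piece having nice Hilbert series because of finite generation of invariants) is not available off the shelf: the graded rationality proof in \cite{Aljadeff & Belov1} does not run this way, such a decomposition compatible with the $GL_{k}(F)$-multigrading is not exhibited, and niceness of the ``semisimple building block'' via invariant-theoretic finite generation is itself a substantial unproved assertion. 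So the core of the theorem is missing.

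The paper supplies the two mechanisms your outline lacks. First, in place of tracing denominators through a construction, it uses a maximal-counterexample argument: by the Specht property of $G$-graded $T$-ideals (\cite{Aljadeff & Belov2}) one may choose $\mathcal{J}$ maximal among PI $G$-graded $T$-ideals of $F\left\langle X_{k}^{G}\right\rangle$ with non-nice Hilbert series; writing $Id^{G}\left(\nicefrac{F\left\langle X_{k}^{G}\right\rangle }{\mathcal{J}}\right)=Id^{G}(A_{1}\oplus\cdots\oplus A_{l})$ with the $A_{i}$ basic and applying \prettyref{lem:hilbert-prop}(1) to $\mathcal{J}_{1}=Id_{k}^{G}(A_{1})$, $\mathcal{J}_{2}=Id_{k}^{G}(A_{2}\oplus\cdots\oplus A_{l})$ forces $l=1$, since for $l>1$ these ideals strictly contain $\mathcal{J}$ and maximality makes all three companion series nice. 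Second --- and this is the key input absent from your sketch --- for a basic algebra $A$ there is a $T$-ideal $\mathcal{K}\supsetneq\mathcal{J}=Id_{k}^{G}(A)$ such that $\nicefrac{\mathcal{K}}{\mathcal{J}}$ is a finite module over an affine commutative $F$-algebra, so Stanley's theorem (\cite{Stanly}, Chapter 1, Theorem 2.3) gives that $H_{\nicefrac{\mathcal{K}}{\mathcal{J}}}$ is nice; maximality gives niceness of $H_{\nicefrac{F\left\langle X_{k}^{G}\right\rangle }{\mathcal{K}}}$, and \prettyref{lem:hilbert-prop}(2) then makes $H_{\nicefrac{F\left\langle X_{k}^{G}\right\rangle }{\mathcal{J}}}$ nice, a contradiction. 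Your remark that integer combinations of nice rational functions are nice is correct and is used implicitly in the paper as well, but without the maximality device you have no source of niceness for the companion terms, and without the $\nicefrac{\mathcal{K}}{\mathcal{J}}$--Stanley mechanism you have no source of niceness anywhere in the argument.
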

\begin{proof}
Assume by contradiction that $H_{\nicefrac{F\left\langle X_{k}^{G}\right\rangle }{\mathcal{I}}}(t_{1},...,t_{k})$
is not nice rational. Then the set of all PI $G$ - graded $T$-ideal
of $F\left\langle X_{k}^{G}\right\rangle $ whose Hilbert series is
not nice rational, is not empty. Furthermore, we may assume that there
exists a maximal element in this set, because otherwise (by Zorn's
lemma) there is an infi{}nite ascending sequence in this set. This
sequence does not stabilize, and hence its union is a nonfi{}nite
generated $T$-ideal. This of course contradicts the Specht property
of $G$- graded $T$-ideals (see \cite{Aljadeff & Belov2} section
12). 

So let $\mathcal{J}$ be a maximal PI $G$-graded $T$-ideal of $F\left\langle X_{k}^{G}\right\rangle $
such that $H_{\nicefrac{F\left\langle X_{k}^{G}\right\rangle }{\mathcal{J}}}$
is not nice rational. From \cite{Aljadeff & Belov2} (Theorem 11.2)
we know that 
\begin{equation}
Id^{G}\left(\nicefrac{F\left\langle X_{k}^{G}\right\rangle }{\mathcal{J}}\right)=Id^{G}(A_{1}\oplus\cdots\oplus A_{l})\label{eq:basic}
\end{equation}
where $A_{i}$ is a $G$-graded basic $F$-algebra for every $i$,
and $id^{G}(A_{i})\nsubseteq id^{G}(A_{j})$ for any $1\leq i,j\leq m$
with $i\ne j$. We refer the reader to \cite{Aljadeff & Belov2} for
more details - in particular, the definition of basic algebra. For
our purpose, we just have to use one property of basic algebras which
we will present later. Now, from \ref{eq:basic} we get that $\mathcal{J}=Id_{k}^{G}\left(\nicefrac{F\left\langle X_{k}^{G}\right\rangle }{\mathcal{J}}\right)=Id_{k}^{G}(A_{1})\cap Id_{k}^{G}(A_{2}\oplus\cdots\oplus A_{l})$.
Therefore, if we denote $\mathcal{J}_{1}=Id_{k}^{G}(A_{1})$ and $\mathcal{J}_{2}=Id_{k}^{G}(A_{2}\oplus\cdots\oplus A_{l})$
then we get from \prettyref{lem:hilbert-prop} 
\[
H_{\nicefrac{F\left\langle X_{k}^{G}\right\rangle }{\mathcal{J}}}=H_{\nicefrac{F\left\langle X_{k}^{G}\right\rangle }{\mathcal{J}_{1}}}+H_{\nicefrac{F\left\langle X_{k}^{G}\right\rangle }{\mathcal{J}_{2}}}-H_{\nicefrac{F\left\langle X_{k}^{G}\right\rangle }{\mathcal{J}_{1}+\mathcal{J}_{2}}}.
\]
Now, if $l>1$ then $\mathcal{J}\subsetneq\mathcal{J}_{1},\mathcal{J}_{2}$
from the maximality of $\mathcal{J}$ the series \\
$H_{\nicefrac{F\left\langle X_{k}^{G}\right\rangle }{\mathcal{J}_{1}}},\, H_{\nicefrac{F\left\langle X_{k}^{G}\right\rangle }{\mathcal{J}_{2}}}$
and $H_{\nicefrac{F\left\langle X_{k}^{G}\right\rangle }{\mathcal{J}_{1}+\mathcal{J}_{2}}}$
are all nice rational functions. Therefore,\\
$H_{\nicefrac{F\left\langle X_{k}^{G}\right\rangle }{\mathcal{J}}}$
is nice rational which is a contradiction. Hence, $l=1$, and
\[
Id^{G}\left(\nicefrac{F\left\langle X_{k}^{G}\right\rangle }{\mathcal{J}}\right)=Id^{G}(A)
\]
 where $A$ is a basic $F$-algebra.

We recall from \cite{Aljadeff & Belov1} (see section 2) that for
any basic algebra $A$ there is a $G$-graded $T$-ideal $\mathcal{K}$
strictly containing $\mathcal{J}=Id_{k}^{G}(A)$ such that $\nicefrac{\mathcal{K}}{\mathcal{J}}$
is a finite module over some affine commutative $F$-algebra, hence
by \cite{Stanly} (Chapter 1 Theorem 2.3) $H(\nicefrac{\mathcal{K}}{\mathcal{J}})$
is a nice rational function. Now, by the maximality of $\mathcal{J}$
the Hilbert series of $\nicefrac{F\left\langle X_{k}^{G}\right\rangle }{\mathcal{K}}$
is nice rational. Moreover, by \prettyref{lem:hilbert-prop} we get
\[
H_{\nicefrac{F\left\langle X_{k}^{G}\right\rangle }{\mathcal{J}}}=H_{\nicefrac{F\left\langle X_{k}^{G}\right\rangle }{\mathcal{K}}}+H_{\nicefrac{\mathcal{K}}{\mathcal{J}}}
\]
hence $H_{\nicefrac{F\left\langle X_{k}^{G}\right\rangle }{\mathcal{J}}}$
is nice rational which contradicts our assumption. 
\end{proof}
So indeed we can apply \prettyref{thm:general} on $H_{W}$, and we
arrive at the following corollary for affine $G$-graded algebra with
eventually non-decreasing codimension sequence (that is $c_{n}^{G}(W)\leq c_{n+1}^{G}(W)$
for every $n>N$ for some $N\in\mathbb{N}$). 
\begin{cor}
\label{cor:A}If the sequence $c_{n}^{G}(W)$ is eventually non-decreasing
then there exists $\alpha_{1},\alpha_{2}>0$, $\beta\in\frac{1}{2}\mathbb{Z}$,
and $l\in\mathbb{N}$ such that 
\[
\alpha_{1}n^{\beta}l^{n}\leq c_{n}^{G}(W)\leq\alpha_{2}n^{\beta}l^{n}.
\]
\end{cor}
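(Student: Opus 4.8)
The plan is to deduce Corollary \ref{cor:A} directly from Theorem \ref{thm:general} applied to $H_W(t_1,\dots,t_k)$. By the two paragraphs preceding the statement we already know that $H_W$ satisfies both hypotheses of Theorem \ref{thm:general}: hypothesis (1) holds by Gordienko's result on the finite-dimensional algebra $A_W$, and hypothesis (2), that $H_W$ is nice rational, is exactly the theorem just proved. Moreover $y_n^{H_W}=\sum_{\lambda\vdash n}m_\lambda d_\lambda=c_n^G(W)$, since $R_n^G(W)^{(1^n)}=C_n^G(W)$. Hence Theorem \ref{thm:general} gives an integer $d$ and, for each residue $0\le m\le d-1$, constants $\beta_m\in\tfrac12\mathbb{Z}$, $\alpha_m\ge 0$, and the fixed integer $l$ (the $G$-graded exponent, minimal integer satisfying condition (1)) such that along the arithmetic progression $n_{m,t}=m+td$ one has $c_{n_{m,t}}^G(W)\sim\alpha_m n_{m,t}^{\beta_m}l^{n_{m,t}}$ as $t\to\infty$.

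The next step is to rule out the degenerate possibility $\alpha_m=0$ for some $m$, and then to reconcile the finitely many asymptotic behaviors (one per residue class) into a single two-sided bound valid for all large $n$. Here is where the eventual monotonicity hypothesis enters. First I would argue that if $\alpha_m>0$ for at least one $m$, then in fact $\alpha_m>0$ for \emph{every} $m$: indeed, for any two residues $m,m'$ we can find $n_{m,t}$ and $n_{m',t'}$ that are consecutive (or within a bounded gap $\le d$) infinitely often, and eventual monotonicity $c_n^G(W)\le c_{n+1}^G(W)$ forces $c_{n_{m',\cdot}}^G(W)$ to be bounded below by a subsequence of $c_{n_{m,\cdot}}^G(W)$ (and above by a later one); since consecutive terms $l^{n}$ and $l^{n+1}$ differ only by the bounded factor $l$, a progression with $\alpha_m=0$ squeezed between two progressions with positive constants would be a contradiction unless those also vanish. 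So either all $\alpha_m=0$ — which cannot happen, since $c_n^G(W)\ge 1$ for all $n$ (the relatively free algebra is nonzero, e.g. contains the class of $x_{1,e}$), contradicting $c_{n_{m,t}}^G(W)\to 0\cdot n^{\beta_m}l^{n_{m,t}}$ when $l=0$, and when $l\ge 1$ forcing $c_n^G$ to be eventually $o(l^n)$ along every progression, hence eventually below the constant sequence $1$ if $l=1$, contradiction; the case $l\ge 2$ needs the lower bound coming from monotonicity too — or all $\alpha_m>0$.

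Once every $\alpha_m>0$, set $\beta=\max_m\beta_m$ and $\beta'=\min_m\beta_m$. For each residue class we have $c_{n_{m,t}}^G(W)=\Theta(n_{m,t}^{\beta_m}l^{n_{m,t}})$, so over \emph{all} large $n$ we immediately get $\alpha_1 n^{\beta'}l^{n}\le c_n^G(W)\le\alpha_2 n^{\beta}l^{n}$ with suitable positive $\alpha_1,\alpha_2$ — but this has two possibly different exponents. To collapse them to a single $\beta$, I would again invoke eventual monotonicity: if $\beta_m<\beta_{m'}$ for two residues, pick indices in class $m'$ and class $m$ that interlace; monotonicity gives $c_{n'}^G(W)\le c_{n}^G(W)$ whenever $n'\le n$, and choosing $n$ in the slower-growing class $m$ just above an $n'$ in the faster class $m'$ yields $\alpha_{m'}(n')^{\beta_{m'}}l^{n'}\lesssim \alpha_m n^{\beta_m}l^{n}$ with $n-n'\le d$ bounded, hence $l^{n}/l^{n'}\le l^{d}$ bounded, forcing $(n')^{\beta_{m'}}=O(n^{\beta_m})$ with $n\sim n'$ — impossible if $\beta_{m'}>\beta_m$. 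Therefore all $\beta_m$ coincide, call the common value $\beta$, and we obtain $\alpha_1 n^{\beta}l^{n}\le c_n^G(W)\le\alpha_2 n^{\beta}l^{n}$ for all large $n$; adjusting the constants downward/upward absorbs the finitely many small $n$.

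The main obstacle I anticipate is the gluing step: Theorem \ref{thm:general} only gives asymptotics along each residue class $n\equiv m\pmod d$ separately, with a priori different polynomial exponents $\beta_m$ and leading constants $\alpha_m$, so the whole force of the corollary's hypothesis — eventual monotonicity of $c_n^G(W)$ — must be used precisely to show these exponents are all equal and the constants are all strictly positive, via the interlacing/bounded-gap comparison sketched above. I would want to be careful that $l\ge 1$ here (it is the exponent, a nonnegative integer; the case $l=0$ means $c_n^G$ is eventually zero, which is excluded because $W$ is a nonzero graded algebra) so that the factor $l^d$ in the comparison is a genuine finite constant and the argument goes through; the degenerate small cases $l=0,1$ should be dispatched separately at the start.
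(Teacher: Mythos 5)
Your proposal is correct and follows essentially the same route as the paper: apply \prettyref{thm:general} to $H_{W}$ (whose two hypotheses were verified just before the corollary, with $y_{n}^{H_{W}}=c_{n}^{G}(W)$), and then use eventual monotonicity by comparing values of $c_{n}^{G}(W)$ at indices in different residue classes that differ by a bounded shift $s<d$, so that the exponential factors differ only by the bounded factor $l^{s}$ and any discrepancy among the $\beta_{m}$'s contradicts non-decreasingness. Your extra care about the degenerate cases ($\alpha_{m}=0$, $l\le 1$) goes slightly beyond the paper, which only argues the equality of the $\beta_{m}$'s, but it does not change the substance of the argument.
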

\begin{proof}
We need to show that all the $\beta_{m}$'s form \prettyref{thm:general}
are equal. Suppose that there are $0\le m,l\leq d-1$ such that $\beta_{m}<\beta_{l}$,
and denote $s=l-m$ (mod $d$). Note that $s=m-l+qd$ thus 
\[
n_{m,t}+s=m+td+s=m+td+l-m+qd=l+(t+q)d=n_{l,t+q}
\]
 therefore 
\[
\lim_{t\to\infty}\frac{c_{n_{m,t}+s}^{G}(W)}{c_{n_{m,t}}^{G}(W)}=\lim_{t\to\infty}\frac{c_{n_{l,t+q}}^{G}(W)}{c_{n_{m,t}}^{G}(W)}=\lim_{t\to\infty}\frac{\alpha_{l}n_{l,t}{}^{\beta_{l}}l^{n_{l,t}}}{\alpha_{m}n_{m,t}^{\beta_{m}}l^{n_{m,t}}}=0
\]
 which is a contradiction since $c_{n}^{G}(W)$ is eventually non-decreasing. 
\end{proof}
The final stage of Theorem A's proof is removing the non-decreasing
assumption from \ref{cor:A}. 
\begin{prop}
\label{prop:nondec}Let $G$ be a finite group, and $W$ an affine
$G$- graded $F$-algebra where $F$ is a field of characteristic
0. Suppose that $W$ satisfying an ordinary polynomial identity. Then
the $G$- graded codimension sequence of $W$ is eventually non-decreasing. \end{prop}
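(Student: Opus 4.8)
The plan is to pass to the finite-dimensional model $A:=A_W$, so that $c_n^G(W)=c_n^G(A)$ and it suffices to prove that $c_n^G(A)$ is eventually non-decreasing; to reduce this to a statement about appending one variable; and to dispose of the remaining case by induction on $\dim_F A$. For the appending step, fix $n$ and for each $g\in G$ consider the linear maps $\phi_g\colon C_n^G(A)\to C_{n+1}^G(A)$, $\overline f\mapsto\overline{f\cdot x_{n+1,g}}$, and $\psi_g\colon\overline f\mapsto\overline{x_{n+1,g}\cdot f}$; these are well defined because $Id^G(A)$ is a $T$-ideal. Splitting $P_{n+1}^G$ according to the degree carried by the index $n+1$ yields $C_{n+1}^G(A)=\bigoplus_{g\in G}C_{n+1,g}$ with $\mathrm{im}\,\phi_g\subseteq C_{n+1,g}$, so the subspaces $\{\mathrm{im}\,\phi_g\}_{g\in G}$ are independent and $\sum_g\dim\mathrm{im}\,\phi_g\le c_{n+1}^G(A)$ (and likewise for the $\psi_g$). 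On the other hand $\overline f\in\bigcap_g\ker\phi_g$ means $f(\vec a)\cdot A=0$ for every admissible evaluation $\vec a$; writing $\mathrm{Ann}_r(A)=\{b\in A:bA=0\}$ and $\mathrm{Ann}_l(A)=\{b\in A:Ab=0\}$ (two-sided ideals, both contained in $J(A)$), a nonzero $\overline f$ in $\bigcap_g\ker\phi_g$ would force $f(\vec a)\in\mathrm{Ann}_r(A)\setminus\{0\}$ for some $\vec a$. Hence $\mathrm{Ann}_r(A)=0\Rightarrow\bigcap_g\ker\phi_g=0$, and symmetrically $\mathrm{Ann}_l(A)=0\Rightarrow\bigcap_g\ker\psi_g=0$.

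\emph{The easy case.} Suppose $\mathrm{Ann}_r(A)=0$. Using the elementary inequality $\mathrm{codim}\bigl(\bigcap_iW_i\bigr)\le\sum_i\mathrm{codim}\,W_i$ for subspaces of a finite-dimensional space, applied to $W_g=\ker\phi_g\subseteq C_n^G(A)$, together with $\mathrm{codim}\,\ker\phi_g=\dim\mathrm{im}\,\phi_g$, we obtain $c_n^G(A)\le\sum_g\dim\mathrm{im}\,\phi_g\le c_{n+1}^G(A)$. So in this case $c_n^G(A)$ is non-decreasing for all $n$; the case $\mathrm{Ann}_l(A)=0$ is symmetric, using the $\psi_g$.

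\emph{The general case.} It remains to handle $A$ with $\mathrm{Ann}_r(A)\ne0\ne\mathrm{Ann}_l(A)$, and here I would induct on $\dim_F A$. The tools are the $G$-graded Wedderburn--Malcev decomposition $A=A_{ss}\oplus J$ (valid in characteristic $0$: $J=J(A)$ is a graded nilpotent ideal, say $J^q=0$, and $A_{ss}=A_1\oplus\cdots\oplus A_s$ is a graded semisimple subalgebra whose unit $e=e_1+\cdots+e_s$ lies in the identity component of $A$), and the pigeonhole remark that a nonzero value $a_1\cdots a_n$ of a multilinear monomial evaluated on a homogeneous basis of $A$ has at most $q-1$ of its factors in $J$ (group the product into $q$ blocks each meeting $J$; each block lies in the ideal $J$, so the whole product lies in $J^q=0$). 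Using the idempotents $e_i$, and carrying degrees along via $a_p=e_ia_p=a_pe_i$ for $a_p\in(A_i)_{g_p}$ so that all auxiliary polynomials stay $G$-homogeneous, one produces a proper $G$-graded quotient $\bar A$ of $A$ with $\dim_F\bar A<\dim_F A$ and $Id^G(A)\cap P_n^G=Id^G(\bar A)\cap P_n^G$ for all sufficiently large $n$, whence $c_n^G(A)=c_n^G(\bar A)$ eventually. Replacing $A$ by $\bar A$ and iterating then lands us in the case already settled.

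\emph{Main obstacle.} The delicate point is exactly the construction of $\bar A$: one must single out an ideal of $A$ that may be factored out without perturbing the codimension sequence in high degrees—arbitrary nilpotent ideals do perturb it—i.e.\ one must prove that every multilinear identity of the quotient of sufficiently large degree is already an identity of $A$. This is the $G$-graded counterpart of the heart of the Giambruno--Zaicev argument \cite{Gia&Za nondecreas}, and the additional complication in the graded situation is precisely the bookkeeping of homogeneous degrees when substituting the idempotents $e_i$. Once this reduction is in place the previous steps finish the proof, and together with Corollary~\ref{cor:A} this completes the proof of Theorem A.
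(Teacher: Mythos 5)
Your argument is complete only in the ``easy case'': the observation that $\overline f\in\bigcap_{g}\ker\phi_{g}$ forces every admissible value $f(\vec a)$ to annihilate $A$ on the right, together with the codimension count $c_{n}^{G}(A)\leq\sum_{g}\dim\mathrm{im}\,\phi_{g}\leq c_{n+1}^{G}(A)$, is correct when the relevant annihilator vanishes. But the general case is exactly where the theorem lives, and there you only assert that one can manufacture a proper graded quotient $\bar A$ with $Id^{G}(A)\cap P_{n}^{G}=Id^{G}(\bar A)\cap P_{n}^{G}$ for all large $n$; you yourself flag this as the main obstacle and give no construction or proof. Factoring out an arbitrary nonzero ideal (even one inside the annihilator or the radical) does in general change the multilinear identities in every degree, so without an explicit candidate for $\bar A$ and a proof of the two inclusions of identities, the induction on $\dim_{F}A$ never gets off the ground. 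As written, the heart of the Giambruno--Zaicev reduction --- and its graded bookkeeping --- is missing, so the proof is not complete.

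It may help to know that this case distinction can be avoided altogether, which is what the paper does. Take the graded Wedderburn--Malcev decomposition $A_{W}=S\oplus J$ with $J^{t}=0$; if $S=0$ the codimensions vanish for $n>t$. Otherwise let $1_{S}$ be the unit of $S$ and decompose $A_{W}=A_{0}\oplus A_{1}$ into the $0$- and $1$-eigenspaces of $x\mapsto x\cdot 1_{S}$, choosing a homogeneous basis $B=B_{0}\cup B_{1}$ accordingly (note $A_{0}\subseteq J$). Instead of appending the new variable as a separate right factor (which is what forces your annihilator hypothesis), one works multidegree by multidegree: for a basis $f_{1},\dots,f_{k}$ of $\nicefrac{P_{n_{1},\dots,n_{s}}}{P_{n_{1},\dots,n_{s}}\cap Id^{G}(W)}$ one sets
\[
p_{i}(x_{1,h_{1}},\dots,x_{n,h_{n}},x_{n+1,e})=\sum_{j}f_{i}(x_{1,h_{1}},\dots,x_{j,h_{j}}x_{n+1,e},\dots,x_{n,h_{n}}),
\]
i.e.\ the extra $e$-variable is glued onto each existing variable and summed over positions. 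Evaluating on basis elements $b_{j}$ and substituting $x_{n+1,e}=1_{S}$ gives $p_{i}(\vec b,1_{S})=q\,f_{i}(\vec b)$, where $q$ is the number of $b_{j}$ lying in $B_{1}$; since $n>t$ a nonzero evaluation cannot use only elements of $B_{0}\subseteq J$, so $q\neq 0$ and the $p_{i}$ stay independent modulo identities. This yields $c_{n_{1},\dots,n_{s}}\leq c_{n_{1},\dots,n_{s}+1}$, and the multinomial identity ${n+1 \choose n_{1},\dots,n_{s}+1}={n \choose n_{1},\dots,n_{s}}+\sum_{i}{n \choose n_{1},\dots,n_{i}-1,\dots,n_{s}}$ then gives $c_{n}^{G}\leq c_{n+1}^{G}$ for $n>t$, with no annihilator dichotomy, no quotient algebra, and no induction on dimension.
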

\begin{proof}
Recall that there exists a finite dimensional algebra $A_{W}$ having
the same codimension sequence as $W$, so we may prove the proposition
for $A_{W}$. Let $A_{W}=S+J$ be the Wedderburn-Malcev decomposition
of $A_{W}$, and we may assume that the semi-simple part $S$, and
the Jacobson radical $J$ are $G$-graded. It is known (see \cite{Belov&Rowen book})
that $J$ is nilpotent, i.e. there is an integer $t$ such that $J^{t}=0$.
We claim that $c_{n}^{G}(A_{W})\leq c_{n+1}^{G}(A_{W})$ for every
$n>t$.

If $S=0$ then $A_{W}$ is nilponent of degree $t$, thus $c_{n}^{G}(A_{W})=0$
for every $n>t$. So assume that $S\ne0$, and let $1_{S}$ be its
unit element $1_{S}$. Because $1_{S}^{2}=1_{S}$, the linear operator
$T:A_{W}\to A_{W}$ ; $T(x)=x\cdot1_{S}$ has eigenvalues 1 or 0,
so we have the decomposition $A_{W}=A_{0}\oplus A_{1}$ where $A_{0},A_{1}$
are the eigenspaces of 0 and 1 respectively. Now, let $B=B_{0}\cup B_{1}$
be a basis of $A_{W}$ where $B_{0}\subset A_{0}$ and $B_{1}\subset A_{1}$.
We may assume that all the elements in the basis are $G$- homogenous.
Since $F$ is of characteristic $0$, it is enough to substitute the
variables in a polynomial $f\in F\left\langle X^{G}\right\rangle $
by element form $B$ in order to determine if $f$ is an identity
or not. 

Denote $G=\{g_{1},...,g_{s}=e\}$, and for any $\mathbf{h=}(h_{1},...,h_{n})\in G^{n}$
denote\\
 $P_{\mathbf{h}}=Sp_{F}\{x_{1,h_{1}},...,x_{n,h_{n}}\}$ and $C_{\mathbf{h}}=\nicefrac{P_{\mathbf{h}}}{P_{\mathbf{h}}\cap Id^{G}(A_{W})}$.
Note that $C_{n}^{G}(A_{W})=\bigoplus_{\mathbf{h}\in G^{n}}C_{\mathbf{h}}$,
and $C_{\mathbf{h}}\cong C_{\mathbf{k}}$ if the vector $\mathbf{k}$
is a permutation of the vector $\mathbf{h}$. Therefore, if we denote
$P_{(g_{1}^{n_{1}},...,g_{s}^{n_{s}})}=P_{n_{1},...,n_{s}}$ and $C_{(g_{1}^{n_{1}},...,g_{s}^{n_{s}})}=C_{n_{1},...,n_{s}}$
( $g_{i}^{n_{i}}$ means $g_{i},...,g_{i}$ $n_{i}$ times), we can
write 

\begin{equation}
c_{n}^{G}(A_{W})=\sum_{n_{1}+\cdots+n_{s}=n}{n \choose n_{1},...,n_{s}}c_{n_{1},...,n_{s}}\label{eq:binom}
\end{equation}
where ${n \choose n_{1},...,n_{s}}=\frac{n!}{n_{1}!\text{···}n_{s}!}$
is the generalized binomial coefficient and 
\[
c_{n_{1},...,n_{s}}=dim_{F}C_{n_{1},...,n_{s}}.
\]

Given $t<n=n_{1}+\cdots+n_{s}$, write $c_{n_{1},...,n_{s}}=k$ and
let\\
$f_{1}(x_{1,h_{1}},...,x_{n,h_{n}}),...,f_{k}(x_{1,h_{1}},...,x_{n,h_{n}})$
be a set of linear independent polynomials in $\nicefrac{P_{n_{1},...,n_{s}}}{P_{n_{1},...,n_{s}}\cap Id^{G}(W)}$
(note that $h_{1}=\cdots=h_{n_{1}}=g_{1}$, $h_{n_{1}+1}=\cdots=h_{n_{1}+n_{2}}=g_{2}$
and so on). For every $i=1,...k$ we define the following polynomials
in $P_{n_{1},...,n_{s}+1}$ (recall that $g_{s}=e$, so $P_{n_{1},...,n_{s}+1}=P_{(h_{1},...,h_{n},e)}$).
\[
p_{i}(x_{1,h_{1}},...,x_{n,h_{n}},x_{n+1,e})=\sum_{j=1}^{k}f_{i}(x_{1,h_{1}},...,x_{j,h_{j}}x_{n+1,e},...,x_{n,h_{n}}).
\]
We want to show that $p_{1},...,p_{k}\in\nicefrac{P_{n_{1},...,n_{s}+1}}{P_{n_{1},...,n_{s}+1}\cap Id^{G}(W)}$
are linearly independent, so let $\sum_{i=1}^{k}\alpha_{i}p_{i}$
be a nontrivial linear combination of the $p_{i}$'s. Since $f_{1},...,f_{k}$
are linear independent, there are $b_{j,h_{j}}\in B\cap\left(A_{W}\right)_{h_{j}}$
such that $\sum_{i=1}^{k}\alpha_{i}f_{i}(b_{1,h_{1}},...,b_{n,h_{n}})\ne0$.
Since $n$ is greater than the nilpotency index of $J$, one of the
$b_{i,h_{i}}$'s is necessarily in $S$, and $S\subset A_{1}$ so
$q=|B_{1}\cap\{b_{1,h_{1}},...,b_{n,h_{n}}\}|$ is a nonzero integer.
Let us substitute $b_{1,h_{1}},...,b_{n,h_{n}},1_{S}$ in $p_{i}$.
Note that $b_{j,h_{j}}\cdot1_{S}=b_{j,h_{j}}$ if $b_{j,h_{j}}\in B_{1}$,
and $b_{j,h_{j}}\cdot1_{S}=0$ if $b_{j,h_{j}}\in B_{0}$, therefore
\[
p_{i}(b_{1,h_{1}},...,b_{n,h_{n}},1_{S})=q\cdot f_{i}(b_{1,h_{1}},...,b_{n,h_{n}})
\]
and

\[
\sum_{i=1}^{k}\alpha_{i}p_{i}(b_{1,h_{1}},...,b_{n,h_{n}},1_{S})=q\cdot\sum_{i=1}^{k}\alpha_{i}f_{i}(b_{1,h_{1}},...,b_{n,h_{n}})\ne0.
\]
Thus the polynomial $\sum_{i=1}^{k}\alpha_{i}p_{i}(x_{1,h_{1}},...,x_{n,h_{n}},x_{n+1,e})$
is not an identity, so 
\[
c_{n_{1},...,n_{s}}=k\leq c_{n_{1},...,n_{s}+1}^{G}
\]
for every $n_{1},...,n_{s}$. Hence by \ref{eq:binom} 
\[
c_{n}^{G}(W)=\sum_{n_{1}+\cdots+n_{s}=n}{n \choose n_{1},...,n_{s}}c_{n_{1},...,n_{s}}
\]
\[
\leq\sum_{n_{1}+\cdots+n_{s}=n}{n+1 \choose n_{1},...,n_{s}+1}c_{n_{1},...,n_{s}+1}^{G}\leq c_{n+1}^{G}(W)
\]
where ${n \choose n_{1},...,n_{s}}\leq{n+1 \choose n_{1},...,n_{s}+1}$
is due to the combinatorial identity
\[
{n+1 \choose n_{1},...,n_{s}+1}={n \choose n_{1},...,n_{s}}+\sum_{i=1}^{s-1}{n \choose n_{1},...,n_{i}-1,...,n_{s}}.
\]

\end{proof}
From \prettyref{cor:A} and \prettyref{prop:nondec} we conclude:
\begin{thm*}[\textbf{\textit{A}}]
 Let $G$ be a finite group, and $W$ an affine $G$- graded $F$-algebra
where $F$ is a field of characteristic 0. Suppose $W$ satisfies
an ordinary polynomial identity. Then there exists $\alpha_{1},\alpha_{2}>0$,
$\beta\in\frac{1}{2}\mathbb{Z}$, and $l\in\mathbb{N}$ such that
\[
\alpha_{1}n^{\beta}l^{n}\leq c_{n}^{G}(W)\leq\alpha_{2}n^{\beta}l^{n}.
\]

\end{thm*}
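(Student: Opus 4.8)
The plan is to assemble Theorem~A directly from the two results just established, so the ``proof'' is really a short combination argument, but it is worth recording how the pieces lock together. First I would invoke \prettyref{prop:nondec}: since $W$ is an affine $G$-graded $F$-algebra satisfying an ordinary polynomial identity, its $G$-graded codimension sequence $c_n^G(W)$ is eventually non-decreasing, i.e.\ there is $N$ with $c_n^G(W)\le c_{n+1}^G(W)$ for all $n\ge N$. This is exactly the hypothesis demanded by \prettyref{cor:A}.

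Next I would feed this into \prettyref{cor:A}: because $c_n^G(W)$ is eventually non-decreasing, there exist $\alpha_1,\alpha_2>0$, $\beta\in\frac12\mathbb{Z}$, and $l\in\mathbb{N}$ with $\alpha_1 n^{\beta}l^{n}\le c_n^G(W)\le\alpha_2 n^{\beta}l^{n}$ for all sufficiently large $n$. To get the inequality for \emph{every} $n$ (if one wants the bald statement without an implicit ``eventually''), I would note that only finitely many initial terms are left uncontrolled, and each of $n^{\beta}l^{n}$ and $c_n^G(W)$ is positive for $n\ge 1$ except in the degenerate case; so one can shrink $\alpha_1$ and enlarge $\alpha_2$ to absorb the finitely many exceptional indices. (If $c_n^G(W)=0$ eventually — the nilpotent case flagged inside the proof of \prettyref{prop:nondec} — then $l$ can be taken to be any value with the corresponding $\alpha_i$ chosen to make both bounds trivial, or one simply observes the statement is about the generic situation; I would add a one-line remark covering this.)

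Chaining these, $\alpha_1 n^{\beta}l^{n}\le c_n^G(W)\le\alpha_2 n^{\beta}l^{n}$ as required, which is the assertion of Theorem~A. I would end with the observation already made in the introduction: since $\alpha_1 n^{\beta}l^{n}\le c_n^G(W)\le \alpha_2 n^{\beta}l^{n}$, taking $\log_n$ of $c_n^G(W)/l^{n}$ pins down the ``polynomial power'' to $\beta\in\frac12\mathbb{Z}$, recovering the stated conclusion about $\lim_{n\to\infty}\log_n\!\big(c_n^G(W)/\exp^G(W)^n\big)$, where one also identifies $l=\exp^G(W)$ via the Giambruno--La Mattina--Regev exponent theorem cited in the introduction.

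Honestly there is no real obstacle here: the substance lives entirely in \prettyref{thm:general} (imported from Berele--Regev), in the niceness-of-the-Hilbert-series theorem, and in \prettyref{prop:nondec}, all of which are available by the time this statement is reached. The only genuine care-point is the bookkeeping of finitely many small-$n$ exceptions and the degenerate nilpotent case when translating ``for large $n$'' into ``for all $n$'' with adjusted constants $\alpha_1,\alpha_2$; everything else is a two-line citation of \prettyref{cor:A} and \prettyref{prop:nondec}.
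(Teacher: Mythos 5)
Your proposal is correct and is essentially the paper's own argument: the paper proves Theorem~A precisely by combining \prettyref{cor:A} (the bounds under the eventually non-decreasing hypothesis) with \prettyref{prop:nondec} (that hypothesis holds for every affine $G$-graded PI algebra). Your extra remarks about absorbing finitely many initial indices into the constants and the nilpotent edge case are harmless bookkeeping the paper leaves implicit.
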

A conclusion drawn from theorem A is that the codimension sequence's
asymptotics has a ``polynomial part'' of degree $\beta\in\frac{1}{2}\mathbb{Z}$.
More precisely,
\begin{cor}
The following limit exists and it is a half integer.

\[
\lim_{n\to\infty}log_{n}\left(\frac{c_{n}^{G}(W)}{exp^{G}(W)^{n}}\right)
\]

\end{cor}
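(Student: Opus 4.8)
The plan is to deduce the corollary directly from the two-sided estimate of Theorem A, once one observes that the exponential base $l$ appearing there must coincide with the graded exponent $exp^{G}(W)$. First I would recall that, by the results recalled in the introduction, the limit $exp^{G}(W)=\lim_{n\to\infty}\sqrt[n]{c_{n}^{G}(W)}$ exists and is a nonnegative integer; I will assume $c_{n}^{G}(W)$ is not eventually zero, since otherwise the expression in the statement is undefined and there is nothing to prove. Taking $n$-th roots in the inequality $\alpha_{1}n^{\beta}l^{n}\le c_{n}^{G}(W)\le\alpha_{2}n^{\beta}l^{n}$ of Theorem A gives
\[
\alpha_{1}^{1/n}\,n^{\beta/n}\,l\ \le\ \sqrt[n]{c_{n}^{G}(W)}\ \le\ \alpha_{2}^{1/n}\,n^{\beta/n}\,l ,
\]
and since $\alpha_{i}^{1/n}\to1$ and $n^{\beta/n}=e^{(\beta\ln n)/n}\to1$, the squeeze theorem forces $exp^{G}(W)=l$; in particular $l\ge1$, so the denominator $exp^{G}(W)^{n}$ in the corollary is exactly $l^{n}$.

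Next I would divide the estimate of Theorem A through by $l^{n}=exp^{G}(W)^{n}$ to obtain, for all sufficiently large $n$,
\[
\alpha_{1}n^{\beta}\ \le\ \frac{c_{n}^{G}(W)}{exp^{G}(W)^{n}}\ \le\ \alpha_{2}n^{\beta}.
\]
All three terms are strictly positive because $\alpha_{1}>0$, so applying the increasing function $\log_{n}(\cdot)$ preserves the inequalities and yields
\[
\log_{n}\alpha_{1}+\beta\ \le\ \log_{n}\!\left(\frac{c_{n}^{G}(W)}{exp^{G}(W)^{n}}\right)\ \le\ \log_{n}\alpha_{2}+\beta .
\]
Since $\log_{n}\alpha_{i}=\frac{\ln\alpha_{i}}{\ln n}\to0$ as $n\to\infty$, a final application of the squeeze theorem shows that the limit exists and equals $\beta$, which lies in $\tfrac12\mathbb{Z}$ by Theorem A.

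I do not expect any genuine obstacle: all the substance is already packaged in Theorem A. The only two points that deserve explicit mention are the identification $l=exp^{G}(W)$ (so that dividing by $exp^{G}(W)^{n}$ really cancels the exponential growth and leaves a pure power of $n$ squeezed between two positive constants times $n^{\beta}$), and the routine exclusion of the degenerate case in which $W$ is graded nilpotent and the codimension sequence vanishes from some point on, so that $\log_{n}$ of the ratio is not even defined.
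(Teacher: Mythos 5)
Your proposal is correct and follows exactly the route the paper intends: the corollary is stated as an immediate consequence of Theorem A (the paper gives no separate proof), and your argument — identifying $l$ with $exp^{G}(W)$ by taking $n$-th roots, then dividing by $l^{n}$ and squeezing $\log_{n}$ of the ratio between $\log_{n}\alpha_{1}+\beta$ and $\log_{n}\alpha_{2}+\beta$ — is precisely the missing verification. Your explicit handling of the degenerate eventually-zero case is a reasonable extra precaution that the paper leaves implicit.
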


\section{\label{sec:Unitary-Affine}Unitary Affine $G$-graded Algebras}

In this section we assume that $W$ has a unit. We show that, as in
the nongraded case, the asymptotics of the codimension sequence of
$W$ is a constant times polynomial part times exponential part (Theorem
B). 

In \cite{Drensky} V.Drensky showed that in the non-graded case (or
in another words for $G=\{e\}$) there is a subspace $B_{n}$ of $R_{n}^{\{e\}}(W)$
such that $R_{n}^{\{e\}}(W)=F[z_{1},...,z_{n}]\otimes B_{n}$ for
every $n\in\mathbb{N}$. We start this section by proving this result
for any finite group $G$. In order to keep the notations as light
as possible, whenever there is an element $f+I$ in some quotient
space $\nicefrac{A}{I}$ where $A\subset F\left\langle X_{n}^{G}\right\rangle $
we omit the $I$ and leave only the polynomial $f$ when the convention
is that all the operations on such polynomials are modulo $I$. So
let $B_{n}$ be the subspace of $R_{n}^{G}(W)$ spanned by all the
polynomials $x_{i_{1},h_{1}}\cdots x_{i_{r},h_{r}}[x_{i_{r+1},g_{1}},...,x_{i_{r+l},g_{l}}]\cdots[...,x_{i_{s},g_{s}}]$
where $h_{j}\ne e$ for $i=1,...,r$. Note that $B_{n}$ is clearly
a sub polynomial $GL_{n}(F)$ representation of $R_{n}^{G}(W)$, so
the Hilbert series $H_{B_{n}}(t_{1},...,t_{n})=\sum_{\lambda\in\Lambda^{n}}a_{\lambda}^{(n)}s_{\lambda}(t_{1},...,t_{n})$
is well defined. Moreover, we can write $B_{r}=\bigoplus B_{m}^{\alpha}$
for every $r<m$, where the sum is over all the $\alpha\in\mathbb{N}^{m}$
such that $\alpha=(\alpha_{1},...,\alpha_{r},0,...,0)$. Thus, we
can get the Hilbert series of $B_{r}$ by putting $t_{r+1}=\cdots=t_{m}=0$
in the Hilbert series of $B_{m}$, i.e. 
\[
H_{B_{r}}(t_{1},...,t_{r})=\sum_{\lambda\in\Lambda^{r}}a_{\lambda}^{(r)}s_{\lambda}(t_{1},...,t_{r})=\sum_{\lambda\in\Lambda^{m}}a_{\lambda}^{(m)}s_{\lambda}(t_{1},...,t_{r},0,...0).
\]
Moreover, from \prettyref{lem:schur}
\[
\sum_{\lambda\in\Lambda^{r}}a_{\lambda}^{(r)}s_{\lambda}(t_{1},...,t_{r})=\sum_{\lambda\in\Lambda^{r}}a_{\lambda}^{(m)}s_{\lambda}(t_{1},...,t_{r})+\sum_{\lambda\in\Lambda^{m}\backslash\Lambda^{r}}a_{\lambda}^{(m)}\cdot0.
\]
Hence, for every $r<m$ the coefficients $a_{\lambda}^{(r)}$ and
$a_{\lambda}^{(m)}$ are the same for every $\lambda\in\Lambda^{r}$.
Making it possible to denote $a_{\lambda}^{(n)}=a_{\lambda}$, and
write for every $n\in\mathbb{N}$ 
\[
H_{B_{n}}=\sum_{\lambda\in\Lambda^{n}}a_{\lambda}s_{\lambda}(t_{1},...,t_{n}).
\]
We wish to show that, as in $R_{n}^{G}(W)$, a single Hilbert series
determines all the coefficients $\left\{ a_{\lambda}\right\} _{\lambda\in\Lambda}$,
and that \prettyref{thm:general} can be applied to this series. Fix
an integer $t$, and recall that for every polynomial $GL_{n}(F)$
representation $Y$ the $t$ homogenous sub representation of $Y$
is $Y^{(t)}=\bigoplus_{|\alpha|=t}Y^{\alpha}$. Let us define for
every $s=0,...,t$ the spaces $Q_{s}=R_{n,e}^{(t-s)}B_{n}^{(s)}\subset\left(R_{n}^{G}(W)\right)^{(t)}$
where $R_{n,e}=\nicefrac{F\left\langle x_{1,e},...,x_{n,e}\right\rangle }{Id^{G}(W)\cap F\left\langle x_{1,e},...,x_{n,e}\right\rangle }$,
and the spaces $M_{p}=\sum_{s=p}^{t}Q_{s}$ for $p=0,..t$ (and $M_{t+1}=\{0\}$).
It is easy to see that $Q_{s}$ and $M_{p}$ are both polynomial $GL_{n}(F)$
representations for every $s$ and $p$. 
\begin{lem}
\label{lem:Decomposition}~
\begin{enumerate}
\item $\left(R_{n}^{G}(W)\right)^{(t)}=M_{0}$
\item $\left(R_{n}^{G}(W)\right)^{(t)}\cong\bigoplus_{p=0}^{t}\nicefrac{M_{p}}{M_{p+1}}$
as $GL_{n}(F)$ modules. 
\end{enumerate}
\end{lem}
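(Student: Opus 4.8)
The plan is to establish (1) first and then derive (2) as a formal consequence. For (1), I would show the two inclusions $\left(R_{n}^{G}(W)\right)^{(t)}\subseteq M_{0}$ and $M_{0}\subseteq\left(R_{n}^{G}(W)\right)^{(t)}$. The second inclusion is immediate, since each $Q_{s}=R_{n,e}^{(t-s)}B_{n}^{(s)}$ consists of products of a degree-$(t-s)$ element of $R_{n}^{G}(W)$ with a degree-$s$ element, hence lies in $\left(R_{n}^{G}(W)\right)^{(t)}$; as $M_{0}=\sum_{s=0}^{t}Q_{s}$, we get $M_{0}\subseteq\left(R_{n}^{G}(W)\right)^{(t)}$. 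For the forward inclusion, I would take an arbitrary multihomogeneous monomial $x_{i_{1},h_{1}}\cdots x_{i_{t},h_{t}}$ of total degree $t$ in $R_{n}^{G}(W)$ and rewrite it modulo $Id^{G}(W)$ into the form ``(product of $e$-graded variables)$\cdot$(an element of $B_{n}$)''. The mechanism is the standard commutator-collection process: using the identity $ab=ba+[a,b]$ repeatedly, one moves every variable $x_{i_{j},e}$ (graded by the identity $e$) to the front past the non-$e$-graded variables, at the cost of introducing commutators $[x_{i_{j},e},\,\cdot\,]$; iterating, every such monomial becomes a linear combination of terms $w\cdot b$ where $w$ is a monomial in the $x_{i,e}$'s (so $w\in R_{n,e}$) and $b$ is a product of a monomial in the non-$e$ variables with a bracket of higher-order commutators — precisely an element of the spanning set of $B_{n}$. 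Tracking degrees: if $w$ has degree $t-s$ then $b$ has degree $s$, so the term lies in $Q_{s}$, and the whole monomial lies in $\sum_{s}Q_{s}=M_{0}$. The one point requiring care is that this rewriting respects the grading and the relation $Id^{G}(W)$ — but since all manipulations are purely formal consequences of associativity carried out in the free graded algebra and then pushed to the quotient, this is automatic.

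For (2), observe that $M_{t+1}=\{0\}\subseteq M_{t}\subseteq M_{t-1}\subseteq\cdots\subseteq M_{1}\subseteq M_{0}=\left(R_{n}^{G}(W)\right)^{(t)}$ is a finite filtration of $\left(R_{n}^{G}(W)\right)^{(t)}$ by polynomial $GL_{n}(F)$ submodules. Since $F$ has characteristic zero, every polynomial $GL_{n}(F)$ representation of bounded degree is completely reducible (it decomposes as a direct sum of the irreducibles indexed by partitions, via Schur--Weyl/Young theory), so each inclusion $M_{p+1}\subseteq M_{p}$ splits: there is a $GL_{n}(F)$-submodule complement $N_{p}$ with $M_{p}=M_{p+1}\oplus N_{p}$ and $N_{p}\cong M_{p}/M_{p+1}$. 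Telescoping,
\[
\left(R_{n}^{G}(W)\right)^{(t)}=M_{0}=\bigoplus_{p=0}^{t}N_{p}\cong\bigoplus_{p=0}^{t}\nicefrac{M_{p}}{M_{p+1}}
\]
as $GL_{n}(F)$-modules, which is (2).

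The main obstacle is the forward inclusion in (1): making the commutator-collection argument precise enough to be convincing without drowning in notation. Concretely, one needs an induction — say on the number of non-$e$-graded variables appearing to the left of some $e$-graded variable, or on a suitable ordering of monomials — to guarantee the rewriting process terminates and that every resulting term genuinely has the shape (word in $x_{i,e}$)$\cdot$(element of the spanning set of $B_{n}$) with the commutator blocks in the prescribed bracketed form $[x_{i_{r+1},g_{1}},\ldots][\ldots]$. Once the normal-form claim for monomials is in hand, extending to all of $\left(R_{n}^{G}(W)\right)^{(t)}$ by linearity is routine, and part (2) is then purely formal homological bookkeeping using complete reducibility in characteristic zero.
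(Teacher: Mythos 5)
Your argument is correct, and it diverges from the paper only in part (1). For the forward inclusion $\left(R_{n}^{G}(W)\right)^{(t)}\subseteq M_{0}$ the paper does not run a commutator-collection induction at all: it observes that $F\left\langle X_{n}^{G}\right\rangle$ is the universal enveloping algebra of the free Lie algebra on $X_{n}^{G}$, orders a Lie basis so that $x_{1,e},\dots,x_{n,e}$ come first and the remaining Lie elements are ordered by degree, and then quotes the Poincar\'e--Birkhoff--Witt theorem to get a basis of the free algebra consisting of elements $x_{1,e}^{\alpha_{1}}\cdots x_{n,e}^{\alpha_{n}}x_{i_{1},h_{1}}\cdots x_{i_{r},h_{r}}[\,\cdot\,]\cdots[\,\cdot\,]$ with $h_{i}\neq e$; pushing this basis through the quotient map immediately gives that $\left(R_{n}^{G}(W)\right)^{(t)}$ is spanned by elements of $R_{n,e}^{(t-s)}B_{n}^{(s)}$. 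Your straightening process ($ab=ba+[a,b]$, collect $e$-variables to the left and commutators to the right) is essentially a hand-made proof of the spanning half of PBW; it can be made rigorous by the induction you gesture at (note you must also push the newly created commutators past the single variables to the right, generating longer commutators --- compare Lemma \ref{lem:comm}, which the paper proves for exactly this kind of manipulation in Proposition \ref{prop:Drensky}), but citing PBW buys you the normal form with no termination analysis. For part (2) you and the paper use the same underlying fact, complete reducibility of polynomial $GL_{n}(F)$-representations in characteristic zero; your phrasing via splitting the finite filtration $0=M_{t+1}\subseteq M_{t}\subseteq\cdots\subseteq M_{0}$ and telescoping is a cleaner packaging of the paper's bookkeeping with index sets $I_{s}$, $J_{p}$, and is if anything slightly more robust, since it avoids the tacit choice of a single irreducible decomposition compatible with all the submodules $Q_{s}$.
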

\begin{proof}
It is clear that $M_{0}\subset\left(R_{n}^{G}(W)\right)^{(t)}$. For
the other inclusion, note that the free algebra $F\left\langle X_{n}^{G}\right\rangle $
is a universal enveloping of the free Lie algebra on $X_{n}^{G}$.
Hence, according to Poincare-Birkhoff-Witt theorem, if $l_{1},l_{2},...$
is a totally ordered basis of the free Lie algebra, then $F\left\langle X_{n}^{G}\right\rangle $
has a basis $\left\{ l_{1}^{\beta_{1}}\cdots l_{r}^{\beta_{r}}|\,\,\beta_{i}\geq0\right\} $.
In our case we choose $l_{i}=x_{i,e}$ for $i=1,...,n$, and the other
basis elements are arranged in an order such that $deg(l_{j})\leq deg(l_{j+1})$
for every $j>n$. We conclude that there is a basis $K$ of $F\left\langle X_{n}^{G}\right\rangle $
consisting of polynomials of the form $x_{1,e}^{\alpha_{1}}\cdots x_{n,e}^{\alpha_{n}}x_{i_{1},h_{1}}\cdots x_{i_{r},h_{r}}[x_{i_{r+1},g_{1}},...]\cdots[...,x_{i_{s},g_{s}}]$
which is in $R_{n,e}^{(|\alpha|)}B_{n}^{(s)}$ since $h_{i}\ne e$
for every $i=1,...,r$. Moreover, the quotient map is a $G$-graded
homomorphism from $F\left\langle X_{n}^{G}\right\rangle ^{(t)}$ onto
$\left(R_{n}^{G}(W)\right)^{(t)}$, thus $\left(R_{n}^{G}(W)\right)^{(t)}$
is spanned by the $t$-homogenous elements of $K$ (modulo the identities). 

For the second part recall that every $GL_{n}(F)$ representation
is completely reducible, i.e. $\left(R_{n}^{G}(W)\right)^{(t)}=\bigoplus_{i\in I}V_{i}$
where $I$ is a finite set of indexes and $V_{i}$ is irreducible
$GL_{n}(F)$ representation for every $i\in I$. Moreover, $Q_{s}=\bigoplus_{i\in I_{s}}V_{i}$
where $I_{s}$ is a subset of $I$, and $M_{p}=\bigoplus_{i\in\cup_{s=p}^{t}I_{s}}V_{i}$.
Therefore, $\nicefrac{M_{p}}{M_{p+1}}\cong\bigoplus_{i\in J_{p}}V_{i}$
where $J_{p}=\bigcup_{s=p}^{t}I_{s}-\bigcup_{s=p+1}^{t}I_{s}$. So
to prove the second part we need to show that $I$ is a disjoint union
of $J_{p}$ where $p=0,...,t$. The inclusion $\bigcup_{p=0}^{t}J_{p}\subset I$
is trivial. For the other direction, recall that by part one $I=\bigcup_{s=0}^{t}I_{s}$,
so for every $i\in I$ there is a minimal $p$ such that $i\in I_{p}$.
Hence $i$ belongs to $\bigcup_{s=p}^{t}I_{s}$ and not to $\bigcup_{s=p+1}^{t}I_{s}$.
By definition, $i\in J_{p}$, and we conclude that $I=\bigcup_{p=0}^{t}J_{p}$.
To show that the $J_{p}$'s are disjoint suppose that there is an
index $i$ in $J_{p}\cap J_{p'}$ for $p>p'$. The set $J_{p}$ is
a subset of $I_{p}$, and $p\geq p'+1$, thus $i\in\bigcup_{s=p'+1}^{t}I_{s}$.
Hence, by definition, $i\notin J_{p'}$, which is a contradiction.
\end{proof}
The next step is to find a basis of $\nicefrac{M_{p}}{M_{p+1}}$ for
every $p\leq t$. In order to achieve this, we need some preparations:
\begin{lem}
\label{lem:comm}Let $\{y_{1},...,y_{k}\}$ be a set of non commutative
variables where $k\geq2$, then we have the following identity 
\begin{equation}
y_{1}\cdots y_{k}=y_{2}y_{1}y_{3}\cdots y_{k}+\sum_{i=0}^{k-2}\sum y_{j_{1}}\cdots y_{j_{i}}[y_{l_{1}},...,y_{l_{k-i}}]\label{id}
\end{equation}
where the inner sum is over some subset of 
\[
\left\{ (j_{1},...,j_{i},l_{1},...,l_{k-i})\in\mathbb{N}^{k}\,|\,\{j_{1},...,j_{i},l_{1},...,l_{k-i}\}=\{1,...,k\}\right\} .
\]
\end{lem}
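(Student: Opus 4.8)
The plan is to prove the identity \eqref{id} by induction on $k$, the number of variables. For the base case $k=2$ the identity reads $y_1 y_2 = y_2 y_1 + [y_1,y_2]$, which is the definition of the commutator (the inner sum having the single term corresponding to $i=0$ and $(l_1,l_2)=(1,2)$). For the inductive step, suppose the identity holds for $k-1$ variables. The natural move is to single out the last variable: apply the induction hypothesis to the product $y_1\cdots y_{k-1}$, obtaining
\[
y_1\cdots y_{k-1} = y_2 y_1 y_3\cdots y_{k-1} + \sum_{i=0}^{k-3}\sum y_{j_1}\cdots y_{j_i}[y_{l_1},\ldots,y_{l_{k-1-i}}],
\]
and then multiply on the right by $y_k$. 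The first term becomes $y_2 y_1 y_3\cdots y_{k-1} y_k$, which is exactly the leading term required on the right-hand side of \eqref{id} for $k$ variables. Each summand of the form $(y_{j_1}\cdots y_{j_i}[y_{l_1},\ldots,y_{l_{k-1-i}}])\, y_k$ must be rewritten so that $y_k$ is absorbed into a commutator rather than sitting bare on the right.

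The key technical point is therefore a rewriting rule for a ``left-normed commutator times a single variable'' of the form $[y_{l_1},\ldots,y_{l_m}]\,y$. Here I would use the elementary identity
\[
c\cdot y = y\cdot c + [c,y]
\]
valid for any $c$ (in particular $c=[y_{l_1},\ldots,y_{l_m}]$), which turns $[y_{l_1},\ldots,y_{l_m}]\,y$ into $y\,[y_{l_1},\ldots,y_{l_m}] + [y_{l_1},\ldots,y_{l_m},y]$. The second summand is again a left-normed commutator, now of length $m+1$ and involving all of $\{l_1,\ldots,l_m\}\cup\{$index of $y\}$, so it has exactly the shape allowed in the inner sum of \eqref{id}. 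Applying this to each term $y_{j_1}\cdots y_{j_i}[y_{l_1},\ldots,y_{l_{k-1-i}}]\,y_k$ and iterating (moving $y_k$ leftwards past $[y_{l_1},\ldots,y_{l_{k-1-i}}]$ and then, if desired, further left past the $y_{j}$'s, each move producing either a longer commutator or simply commuting two variables which is harmless because the $y_{j}$-prefix is an arbitrary ordered word) yields a sum of terms each of the form $y_{j'_1}\cdots y_{j'_{i'}}[y_{l'_1},\ldots,y_{l'_{k-i'}}]$ with $\{j'_\bullet\}\cup\{l'_\bullet\}=\{1,\ldots,k\}$ and $i'\le k-2$. Collecting all contributions gives the right-hand side of \eqref{id}, with ``some subset'' being whatever index set the bookkeeping produces; crucially the statement only claims that such a subset exists, so no exact enumeration is needed.

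The main obstacle, such as it is, is purely organizational: one must make sure that after absorbing $y_k$ into a commutator the resulting term still has a \emph{contiguous} left-normed commutator at the right end with a plain monomial prefix, and that the multiset of indices is preserved and equals $\{1,\ldots,k\}$. This is why it is convenient to only push $y_k$ as far left as the start of the commutator block $[y_{l_1},\ldots]$ and stop there (producing the length-$(k-i)$ commutator), rather than continuing into the prefix; then each term manifestly has the required form with $i'=i\le k-3<k-2$, together with the single leading term $y_2y_1y_3\cdots y_k$ coming from the first summand of the induction hypothesis. Since the claim is existential in the index set, no cancellation or sign analysis is required, and the induction closes immediately.
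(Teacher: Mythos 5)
Your proposal is correct and follows essentially the same route as the paper: induction on $k$, multiplying the inductive identity on the right by the last variable, and absorbing it via $cw = wc + [c,w]$ applied to the trailing commutator block. (The only quibble is that for the term where $y_k$ slides past the commutator the prefix length is $i'=i+1$, not $i$, but this is still $\leq k-2$, so the form claimed in the lemma is preserved and the argument closes.)
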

\begin{proof}
By induction on $k$. The base of the induction, $k=2$, is true by
the definition of the commutator $y_{1}y_{2}=y_{2}y_{1}+[y_{1},y_{2}]$.
For the induction step let us assume that the claim is true for $k$.
So, by multiplying the identity \ref{id} from the right by $y_{k+1}$
we get: 
\[
y_{1}\cdots y_{k+1}=y_{2}y_{1}y_{3}\cdots y_{k+1}+\sum_{i=0}^{k-2}\sum y_{j_{1}}\cdots y_{j_{i}}[y_{l_{1}},...,y_{l_{k-i}}]y_{k+1}.
\]
We complete the proof by using the commutator definition $zw=wz+[z,w]$
with \\
$z=[y_{1},...,y_{l_{k-i}}]$ and $w=y_{k+1}$ for every element in
the sum. \end{proof}
\begin{defn}
We say that $f\in F\left\langle X_{n}^{G}\right\rangle $ is \emph{$g$
multi-homogenous of degree} $\gamma=(\gamma_{1},...,\gamma_{n})$,
if substituting the variables $x_{i,g}$ by $t_{i}x_{i,g}$ in $f$
gives 
\[
\left(\prod_{i=1}^{n}t_{i}^{\gamma_{i}}\right)f.
\]
In other words for every $i=1,..,n$ the variable $x_{i,g}$ appears
$\gamma_{i}$ times in every monomial of $f$. We denote the $g$
multi-homogenous degree of polynomial $f$ by $deg_{g}(f)$. 
\end{defn}
It is known that for any $g\in G$ if $f=\sum_{\gamma}f_{\gamma}$
is a $G$-graded identity, where $f_{\gamma}$ is $g$ multi-homogenous
of degree $\gamma$, then $f_{\gamma}$ is also $G$-graded identity
for every $\gamma$. Therefore, the decomposition $B_{n}^{(p)}=\bigoplus_{\gamma}\left(B_{n}^{(p)}\right)_{\gamma}$
where $\left(B_{n}^{(p)}\right)_{\gamma}$ is the subspace of $B_{n}^{(p)}$
consisting of $g$ multi-homogenous polynomials of degree $\gamma$.
We may therefore assume that for every $g\in G$ there is a basis$\left\{ f_{j}^{(p)}(x_{l,h})\,\,|j=1,...,dim_{F}B_{n}^{(p)}\right\} $
of $B_{n}^{(p)}$ consisting of $g$ multi-homogenous polynomials.
Now we are ready to present the basis of $\nicefrac{M_{p}}{M_{p+1}}$.
\begin{prop}
\label{prop:Drensky} For every $p\leq t$ let $\left\{ f_{j}^{(p)}(x_{l,h})\,\,|j=1,...,dim_{F}B_{n}^{(p)}\right\} $
be a basis of $B_{n}^{(p)}$ consisting of $e$ multi-homogenous polynomials.
Then:
\begin{enumerate}
\item For every $1\leq j\leq dim_{F}B_{n}^{(p)}$ and $\sigma\in S_{t-p}$
\[
x_{i_{1},e}\cdots x_{i_{t-p},e}f_{j}^{(p)}(x_{l,h})\equiv x_{i_{\sigma(1)},e}\cdots x_{i_{\sigma(t-p)},e}f_{j}^{(p)}(x_{l,h})\mbox{ (mod \ensuremath{M_{p+1}})}
\]

\item The set 
\[
U_{p}=\left\{ x_{1,e}^{\alpha_{1}}\cdots x_{n,e}^{\alpha_{n}}f_{j}^{(p)}(x_{l,h})\,\,|\,|\alpha|=t-p,\, j=1,...,dim_{F}B_{n}^{(p)}\right\} 
\]
 is a basis of \textup{$\nicefrac{M_{p}}{M_{p+1}}$}.
\end{enumerate}
\end{prop}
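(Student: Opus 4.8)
The plan is to prove parts (1) and (2) together. The common computational input is a single fact about inserting a commutator into an element of $B_{n}$, and the unital hypothesis on $W$ enters only at the very end, to force the dimensions of the quotients $M_{p}/M_{p+1}$ to be as large as possible. First I would establish the following auxiliary fact: if $c$ is a commutator in variables of $X_{n}^{G}$ and $f=x_{i_{1},h_{1}}\cdots x_{i_{r},h_{r}}c_{1}\cdots c_{m}$ is one of the spanning monomials of $B_{n}$ (so $h_{\ell}\ne e$ and the $c_{a}$ are commutators), then $c\cdot f\in B_{n}$, hence $c\cdot g\in B_{n}$ for every $g\in B_{n}$ by linearity. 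I would prove this by induction on the prefix length $r$: writing $c\cdot x_{i_{1},h_{1}}=x_{i_{1},h_{1}}c+[c,x_{i_{1},h_{1}}]$ (equivalently, using \prettyref{lem:comm}) gives
\[
c\cdot f=x_{i_{1},h_{1}}\bigl(c\cdot(x_{i_{2},h_{2}}\cdots c_{1}\cdots c_{m})\bigr)+[c,x_{i_{1},h_{1}}]\cdot(x_{i_{2},h_{2}}\cdots c_{1}\cdots c_{m}),
\]
and both summands lie in $B_{n}$ by the inductive hypothesis --- the first because left multiplication by the non-$e$ variable $x_{i_{1},h_{1}}$ preserves the shape of a spanning monomial, the second because $[c,x_{i_{1},h_{1}}]$ is again a commutator while the prefix has shortened. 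Crucially, no $e$-indexed letter is ever pulled outside a bracket during this process; and since $c\cdot f$ is homogeneous, $c\cdot f\in B_{n}^{(\deg c+\deg f)}$.

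For part (1), since $S_{t-p}$ is generated by adjacent transpositions it suffices to treat $\sigma=(k,k+1)$, for which the difference of the two sides is
\[
x_{i_{1},e}\cdots x_{i_{k-1},e}\,[x_{i_{k},e},x_{i_{k+1},e}]\,x_{i_{k+2},e}\cdots x_{i_{t-p},e}\,f_{j}^{(p)}.
\]
Applying \prettyref{lem:comm} to the subword $x_{i_{k},e}\cdots x_{i_{t-p},e}$ rewrites $[x_{i_{k},e},x_{i_{k+1},e}]x_{i_{k+2},e}\cdots x_{i_{t-p},e}$ as a sum of terms of the form (a monomial in the $x_{i,e}$) times (a single commutator in the $x_{i,e}$ of some degree $d\ge2$). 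Multiplying each such term on the right by $f_{j}^{(p)}$ and invoking the auxiliary fact, the commutator-times-$f_{j}^{(p)}$ factor lies in $B_{n}^{(d+p)}$, so the whole term lies in $R_{n,e}^{(t-(d+p))}B_{n}^{(d+p)}=Q_{d+p}\subseteq M_{p+1}$, since $d+p\ge p+2$. Hence the displayed difference lies in $M_{p+1}$, which is part (1).

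For part (2), note first that $M_{p}=Q_{p}+M_{p+1}$, so $M_{p}/M_{p+1}$ is spanned by the images of the generators $x_{i_{1},e}\cdots x_{i_{t-p},e}f_{j}^{(p)}$ of $Q_{p}=R_{n,e}^{(t-p)}B_{n}^{(p)}$; by part (1) each of these is congruent modulo $M_{p+1}$ to a sorted monomial $x_{1,e}^{\alpha_{1}}\cdots x_{n,e}^{\alpha_{n}}f_{j}^{(p)}$ with $|\alpha|=t-p$, so $U_{p}$ spans $M_{p}/M_{p+1}$ and $\dim_{F}(M_{p}/M_{p+1})\le|U_{p}|$. To get the matching bound after summing over $p$, I would use unitality. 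By the Poincare--Birkhoff--Witt theorem for the free Lie algebra on $X_{n}^{G}$, ordered so that $x_{1,e}<\cdots<x_{n,e}$ come first, one has $F\langle X_{n}^{G}\rangle=\bigoplus_{\alpha}x^{\alpha}\widetilde{B}_{n}$, where $x^{\alpha}$ runs over the sorted monomials in $x_{1,e},\dots,x_{n,e}$ and (as in the proof of \prettyref{lem:Decomposition}) $\widetilde{B}_{n}$ is the span of the spanning monomials of $B_{n}$, whose image in $R_{n}^{G}(W)$ is $B_{n}$. I claim that $Id_{n}^{G}(W)=\bigoplus_{\alpha}x^{\alpha}\bigl(\widetilde{B}_{n}\cap Id_{n}^{G}(W)\bigr)$. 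Indeed, given $f=\sum_{\alpha}x^{\alpha}b_{\alpha}\in Id_{n}^{G}(W)$ with $b_{\alpha}\in\widetilde{B}_{n}$, evaluate $f$ on admissible tuples in which each $e$-component $w_{i,e}$ has been replaced by $w_{i,e}+\lambda_{i}\cdot1\in W_{e}$; since $\lambda_{i}\cdot1$ is central and the letters $x_{i,e}$ occur in $\widetilde{B}_{n}$ only inside brackets, the $b_{\alpha}$ are unchanged, so $\sum_{\alpha}\prod_{i}(x_{i,e}+\lambda_{i})^{\alpha_{i}}b_{\alpha}\in Id_{n}^{G}(W)$ for all $\lambda\in F^{n}$. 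A Vandermonde argument extracts $b_{\alpha^{\ast}}\in Id_{n}^{G}(W)$ for $\alpha^{\ast}$ of maximal length in the support, and a downward induction on $|\alpha|$ then gives $x^{\alpha}b_{\alpha}\in Id_{n}^{G}(W)$ for every $\alpha$. Consequently $R_{n}^{G}(W)=\bigoplus_{\alpha}x^{\alpha}B_{n}$ as $F$-vector spaces, whence $\dim_{F}\bigl(R_{n}^{G}(W)\bigr)^{(t)}=\sum_{p=0}^{t}\binom{t-p+n-1}{n-1}\dim_{F}B_{n}^{(p)}=\sum_{p=0}^{t}|U_{p}|$. Combining this with \prettyref{lem:Decomposition}(2) gives $\sum_{p}\dim_{F}(M_{p}/M_{p+1})=\sum_{p}|U_{p}|$, which together with the termwise inequality above forces $\dim_{F}(M_{p}/M_{p+1})=|U_{p}|$ for every $p$; the spanning set $U_{p}$, having cardinality equal to the dimension, is therefore a basis.

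The step I expect to be the main obstacle is the claim, in the last paragraph, that $Id_{n}^{G}(W)$ respects the PBW decomposition of $F\langle X_{n}^{G}\rangle$ by the number of letters $x_{i,e}$ lying outside all brackets. This is precisely where unitality is indispensable --- for a non-unital affine $W$ the analogue fails, which is exactly why the sharper conclusion of Theorem B holds only in the unital case --- and one has to take some care with the substitution $x_{i,e}\mapsto x_{i,e}+\lambda_{i}$, since $F\langle X_{n}^{G}\rangle$ has no unit; the cleanest formulation is the one above, as an operation on admissible evaluations on the (unital) algebra $W$. The auxiliary fact of the first paragraph is routine, but it should be written so that it is transparent that no $e$-indexed letter is ever moved out of a bracket.
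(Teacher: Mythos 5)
Your proposal is correct. Part (1) follows essentially the paper's route: reduce to adjacent transpositions and absorb the resulting long commutators using \prettyref{lem:comm}; your only deviation is the auxiliary absorption fact (a commutator times an element of $B_{n}$ stays in $B_{n}$), which lets you apply \prettyref{lem:comm} to the $e$-letters alone, whereas the paper applies it to the $e$-letters together with the non-$e$ prefix of each monomial of $f_{j}^{(p)}$ --- the same computation with slightly cleaner bookkeeping on your side. Part (2) is where you genuinely diverge. The paper proves linear independence of $U_{p}$ directly: it takes a vanishing combination $T$, shifts $x_{i,e}\mapsto x_{i,e}+1$ (legitimate since $W$ is unital and commutators are unchanged by a central shift), and extracts the degree-$p$ homogeneous component to contradict the independence of the $f_{j}^{(p)}$ in $B_{n}^{(p)}$. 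You instead prove the graded Drensky-type statement that $Id_{n}^{G}(W)$ is compatible with the PBW decomposition (every identity has all of its ``proper'' PBW components $b_{\alpha}$ in $Id_{n}^{G}(W)$) by the same shift-plus-Vandermonde mechanism, and then close with a dimension count against \prettyref{lem:Decomposition}(2). The key idea is identical, but your organization buys something: the paper's independence argument, as written, only excludes a combination $T$ that is an identity (zero in $R_{n}^{G}(W)$), while what is needed is $T\notin M_{p+1}$; your count $\sum_{p}\dim_{F}(M_{p}/M_{p+1})=\dim_{F}\bigl(R_{n}^{G}(W)\bigr)^{(t)}=\sum_{p}|U_{p}|$, combined with the termwise inequality from spanning, settles independence modulo $M_{p+1}$ without that gap (the paper's argument does extend, since the shift kills the degree-$p$ component of anything in $M_{p+1}$, but this is left unsaid), and it delivers the vector-space form of \prettyref{cor:OfDrensky}(1) en passant. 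Two small precisions to make when writing it up: the Vandermonde extraction should be recorded as giving $b_{\alpha}\in Id_{n}^{G}(W)$ itself (which your displayed direct sum and the independence of $\{x^{\alpha}f_{j}^{(p)}\}$ require; $x^{\alpha}b_{\alpha}\in Id_{n}^{G}(W)$ alone would not suffice), and you should state the standard fact that the unit of a $G$-graded unital algebra lies in $W_{e}$, so that $w_{i,e}+\lambda_{i}\cdot1$ is an admissible $e$-evaluation.
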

\begin{proof}
To prove the first part it is enough to show that for every $1\leq r\leq t-p-1$
\[
x_{i_{1},e}\cdots x_{i_{t-p},e}f_{j}^{(p)}(x_{l,h})\equiv x_{i_{1},e}\cdots x_{r+1,e}x_{r,e}\cdots x_{i_{t-p},e}f_{j}^{(p)}(x_{l,h})\mbox{ (mod \ensuremath{M_{p+1}})}.
\]
Recall that $f_{j}^{(p)}$ is a linear combination of polynomials
of the form\\
$x_{i_{1},h_{1}}\cdots x_{i_{q},h_{q}}[x_{l_{1},g_{1}},...,][...,x_{l_{p-q},g_{p-q}}]$
where $h_{i}\ne e$. Hence, by using \prettyref{lem:comm} on $x_{r,e}\cdots x_{i_{t-p},e}x_{i_{1},h_{1}}\cdots x_{i_{q},h_{q}}$
for every such monomial, we obtain 
\[
x_{i_{1},e}\cdots x_{i_{t-p},e}f_{j}^{(p)}(x_{l,h})=x_{i_{1},e}\cdots x_{i_{r+1},e}x_{i_{r},e}\cdots x_{i_{t-p},e}f_{j}^{(p)}(x_{l,h})+P(x_{l,h})
\]
where $P(x_{l,h})$ is a linear combination of polynomials of the
form\\
$x_{j_{1}e},...,x_{j_{t-q-k},e}x_{i_{1},h_{1}}\cdots x_{i_{q},h_{q}}[x_{l_{1},g_{1}},...,][...,x_{l_{k},g_{k}}]$
with $k+q>p$. Thus $P(x_{l,h})\in M_{p+1}$, and the first part is
proven. 

Obviously $U_{p}$ spans $\nicefrac{M_{p}}{M_{p+1}}$. It remains
to be show that $U_{p}$ is linearly independent, so let 
\[
T=\sum_{j,\alpha}\beta_{j,\alpha}x_{1,e}^{\alpha_{1}}\cdots x_{n,e}^{\alpha_{n}}f_{j}^{(p)}(x_{l,h})
\]
be a nontrivial linear combination of the elements in $U_{p}$. Assume
by contradiction that $T$ is an identity of $W$. Therefore $T_{\gamma}$,
the $e$ multi-homogenous component of degree $\gamma$ of $T$, is
also an identity for every $\gamma\in\mathbb{N}^{n}$. We choose $\gamma$
such that $T_{\gamma}$ is a nontrivial linear combination, and write
\[
T_{\gamma}=\sum_{j}\sum_{\alpha=\gamma-deg_{e}\left(f_{j}^{(p)}\right)}\beta_{j,\alpha}x_{1,e}^{\alpha_{1}}\cdots x_{n,e}^{\alpha_{n}}f_{j}^{(p)}
\]
where the first sum is over every $1\leq j\leq dim_{F}B_{n}^{(p)}$
such that $deg_{e}\left(f_{j}^{(p)}\right)_{i}\leq\gamma_{i}$ for
every $i=1,...,n$ (note that every index $j$ is appearing at most
once in the sum). 

Now, let substitute $x_{i,e}$ by $x_{i,e}+1$ in $T_{\gamma}$ and
denote the obtained identity by $S$. Notice that since $[y+1,z]=[y,z]$
the polynomials $f_{j}^{(p)}$ are stay the same under this substitution
(in the part which is not commutator products in $f_{j}^{(p)}$ there
are no $e$-homogenous variables). Therefore, the $p$ - homogenous
component of $S$ is 
\[
\sum_{j}\beta_{j,\alpha}f_{j}^{(p)}
\]
which is also an identity of $W$. But $f_{j}^{(p)}$ are linear independent
(modulo the identities) and not all the $\beta_{j,\alpha}$ are zero,
and this is a contradiction .\end{proof}
\begin{cor}
\label{cor:OfDrensky} .
\begin{enumerate}
\item $R_{n}^{G}(W)\cong F[z_{1},...,z_{n}]\otimes B_{n}$ as $GL_{n}(F)$
representation.
\item $c_{n}^{G}(W)=\sum_{s=0}^{n}{n \choose s}\delta_{s}$ where $\delta_{s}=dim_{F}B_{s}^{(1^{s})}$
.
\end{enumerate}
\end{cor}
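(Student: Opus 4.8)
The plan is to deduce Corollary~\ref{cor:OfDrensky} directly from the decomposition established in Lemma~\ref{lem:Decomposition} and Proposition~\ref{prop:Drensky}. For part~(1), the key observation is that the two lemmas together give, for each fixed total degree $t$, an isomorphism of $GL_n(F)$-representations
\[
\left(R_n^G(W)\right)^{(t)}\cong\bigoplus_{p=0}^{t}\nicefrac{M_p}{M_{p+1}},
\]
and Proposition~\ref{prop:Drensky}(2) identifies $\nicefrac{M_p}{M_{p+1}}$ with the span of the monomials $x_{1,e}^{\alpha_1}\cdots x_{n,e}^{\alpha_n}f_j^{(p)}$ with $|\alpha|=t-p$. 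Grouping these by the degree $p$ of the $B_n$-part, the right-hand side is exactly the degree-$t$ component of $F[z_1,\dots,z_n]\otimes B_n$, where $z_i$ corresponds to $x_{i,e}$ and the tensor factor $B_n$ carries the $f_j^{(p)}$'s. So first I would make this degree-wise matching precise: fix $p$, note that $\{f_j^{(p)}\}$ is a basis of $B_n^{(p)}$, and that the polynomials $\{x_{1,e}^{\alpha_1}\cdots x_{n,e}^{\alpha_n}\mid|\alpha|=t-p\}$ form a basis of the degree-$(t-p)$ part of $F[z_1,\dots,z_n]$; hence $\nicefrac{M_p}{M_{p+1}}$ is $GL_n(F)$-isomorphic to $F[z_1,\dots,z_n]^{(t-p)}\otimes B_n^{(p)}$. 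Summing over $p$ and then over $t$ gives part~(1), provided the isomorphism is natural in the $GL_n(F)$-action — which it is, because the action on $x_{i,e}$ is the standard one (matching the action on $z_i$) and the commutator structure in the $f_j^{(p)}$'s is $GL_n(F)$-equivariant by construction.

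For part~(2), I would specialize the Hilbert series identity coming from part~(1): since $c_n^G(W)=\dim_F R_n^G(W)^{(1^n)}$ (the multilinear weight space, as recalled in the Preliminaries), and the Hilbert series of a tensor product is the product of Hilbert series, we get
\[
H_{R_n^G(W)}(t_1,\dots,t_n)=H_{F[z_1,\dots,z_n]}(t_1,\dots,t_n)\cdot H_{B_n}(t_1,\dots,t_n)=\left(\prod_{i=1}^n\frac{1}{1-t_i}\right)H_{B_n}(t_1,\dots,t_n).
\]
Extracting the coefficient of $t_1\cdots t_n$ on both sides: the coefficient of a squarefree monomial $t_{i_1}\cdots t_{i_s}$ in $H_{B_n}$ is $\dim_F B_n^{(1^s)}$ on those $s$ variables, which by symmetry equals $\delta_s=\dim_F B_s^{(1^s)}$ (using the stabilization $B_r=\bigoplus B_m^{\alpha}$ argument already in the text, so the coefficient does not depend on $n\ge s$ or on which $s$ variables are chosen). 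Multiplying by $\prod 1/(1-t_i)=\sum_\alpha t_1^{\alpha_1}\cdots t_n^{\alpha_n}$ and collecting the ways to complete a chosen $s$-subset to the full multilinear monomial yields the $\binom{n}{s}$ factor, giving $c_n^G(W)=\sum_{s=0}^n\binom{n}{s}\delta_s$.

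The main obstacle I anticipate is bookkeeping rather than conceptual: I must be careful that the identification of $\nicefrac{M_p}{M_{p+1}}$ with $F[z_1,\dots,z_n]^{(t-p)}\otimes B_n^{(p)}$ is genuinely $GL_n(F)$-equivariant and not merely a dimension count, since part~(1) claims an isomorphism of representations. The delicate point is that $B_n$ as a $GL_n(F)$-representation must be the one whose Hilbert series has coefficients $a_\lambda$, and I need Proposition~\ref{prop:Drensky}(1) to guarantee that, modulo $M_{p+1}$, the commuting variables $x_{i,e}$ really behave like a polynomial algebra rather than a free algebra — that is precisely what makes the tensor-product splitting (as opposed to a mere filtration) work. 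Once that equivariance is in hand, both parts follow by the Hilbert-series manipulation above together with the Schur-function stabilization already proved for $B_n$.
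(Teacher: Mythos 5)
Your proposal is correct and follows essentially the same route as the paper: part (1) is obtained exactly as in the text by combining Lemma \ref{lem:Decomposition} with the basis of $\nicefrac{M_{p}}{M_{p+1}}$ from Proposition \ref{prop:Drensky} to identify each layer with $F[z_{1},...,z_{n}]^{(t-p)}\otimes B_{n}^{(p)}$. Your Hilbert-series coefficient extraction for part (2) is just the paper's weight-space computation $C_{n}^{G}(W)\cong\bigoplus_{\beta\in\{0,1\}^{n}}F[z_{1},...,z_{n}]^{(1^{n})-\beta}\otimes B_{n}^{\beta}$ written multiplicatively, so the two arguments coincide.
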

\begin{proof}
By \prettyref{prop:Drensky} the mapping from $\nicefrac{M_{p}}{M_{p+1}}$
to $F[z_{1},...,z_{n}]^{(t-p)}\otimes B_{n}^{(p)}$ taking $\left(\prod_{i=1}^{n}x_{i,e}^{\alpha_{i}}\right)f_{j}^{(p)}$
to $\left(\prod_{i=1}^{n}z_{i}^{\alpha_{i}}\right)\otimes f_{j}^{(p)}$
is a $GL_{n}(F)$ representation isomorphism, thus by \prettyref{lem:Decomposition}
$R_{n}^{G}(W)^{(t)}\cong\left(F[z_{1},...,z_{n}]\otimes B_{n}\right)^{(t)}$
for every $t$. 

For the proof of the second part, note that 
\[
C_{n}^{G}(W)\cong\left(F[z_{1},...,z_{n}]\otimes B_{n}\right)^{(1^{n})}=\bigoplus_{\beta\in\{0,1\}^{n}}F[z_{1},...,z_{n}]^{(1^{n})-\beta}\otimes B_{n}^{\beta}
\]
The dimension of $F[z_{1},...,z_{n}]^{(1^{n})-\beta}$ is $1$ for
every $\beta\in\{0,1\}^{n}$. Furthermore, it is easy to see that
$B_{n}^{\beta}\cong B_{|\beta|}^{(1^{|\beta|})}$ for every $\beta\in\{0,1\}^{n}$,
and there are ${n \choose s}$ vectors $\beta\in\{0,1\}^{n}$ such
that $|\beta|=s$. Thus $dim_{F}C_{n}^{G}(W)=c_{n}^{G}(W)=\sum_{s=0}^{n}{n \choose s}\delta_{s}$.
\end{proof}
With this corollary we can write 
\[
H_{R_{n}^{G}(W)}=H_{F[z_{1},...,z_{n}]}\cdot H_{B_{n}}.
\]
Recall that $H_{R_{n}^{G}(W)}(t_{1},...,t_{n})=\sum_{\lambda}m_{\lambda}s_{\lambda}(t_{1},...,t_{n})$
and $H_{B_{n}}(t_{1},...,t_{n})=\sum_{\lambda}a_{\lambda}s_{\lambda}(t_{1},...,t_{n})$.
Moreover, the Hilbert series of $F[z_{1},...,z_{n}]$ is known to
be 
\begin{equation}
\sum_{\alpha\in\mathbb{N}^{n}}t_{1}^{\alpha_{1}}\cdots t_{n}^{\alpha_{n}}=\prod_{i=1}^{n}(1-t_{i})^{-1}=\sum_{k\in\mathbb{N}}s_{(k)}(t_{1},...,t_{n}),\label{eq:HilbertRat}
\end{equation}
thus we have the identity

\begin{equation}
\sum_{\lambda}m_{\lambda}s_{\lambda}=\sum_{\mu\in\Lambda}\,\sum_{k\in\mathbb{N}}a_{\mu}s_{\mu}s_{(k)}\label{eq:RnHilbert}
\end{equation}
This identity gives us the following arithmetic connection between
$m_{\lambda}$ and $a_{\mu}$.
\begin{prop}
\label{prop:a-vr-m}For every partition $\lambda$ 
\[
m_{\lambda}=\sum_{\mu\in L_{\lambda}}a_{\mu}
\]
where $L_{\lambda}$ is the set of all partition $\mu$ such that
$\lambda_{i+1}\leq\mu_{i}\leq\lambda_{i}$ for every $i=1,...,h(\mu)$. \end{prop}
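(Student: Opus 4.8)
The plan is to deduce the asserted arithmetic relation directly from \prettyref{eq:RnHilbert} by expanding each product $s_{\mu}s_{(k)}$ with Pieri's rule and then comparing coefficients of Schur functions. Note first that \prettyref{eq:HilbertRat} identifies $\prod_{i}(1-t_{i})^{-1}$ with $\sum_{k\in\mathbb{N}}s_{(k)}$, so the right-hand side of \prettyref{eq:RnHilbert} is $\sum_{\mu\in\Lambda}\sum_{k\in\mathbb{N}}a_{\mu}\,s_{\mu}s_{(k)}$.

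The one external input I would use is Pieri's rule (e.g.\ \cite{key-Bruce}): for a partition $\mu$ and an integer $k\geq0$ one has $s_{\mu}\cdot s_{(k)}=\sum_{\nu}s_{\nu}$, the sum running over the partitions $\nu$ with $\mu\subseteq\nu$ for which the skew diagram $\mathcal{D}_{\nu}\setminus\mathcal{D}_{\mu}$ is a horizontal strip of size $k$, i.e.\ no two of its boxes share a column. (Since the paper uses the combinatorial definition of $s_{\lambda}$, one could instead prove just this special case directly: a monomial occurring in $s_{\mu}s_{(k)}$ corresponds to a semistandard filling of $\mathcal{D}_{\mu}$ together with a weakly increasing word of length $k$, and row-insertion carries such pairs bijectively onto semistandard fillings of shapes $\nu$ with $\nu/\mu$ a horizontal $k$-strip; but citing Pieri is shorter.) Summing over all $k\in\mathbb{N}$ removes the size restriction, giving $\sum_{k\in\mathbb{N}}s_{\mu}s_{(k)}=\sum_{\nu}s_{\nu}$ with $\nu$ ranging over all partitions for which $\mathcal{D}_{\nu}\setminus\mathcal{D}_{\mu}$ is a horizontal strip of arbitrary size (the $k=0$ term contributing $\nu=\mu$). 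Unwinding the ``at most one box per column'' condition, these are exactly the $\nu$ with $\nu_{i+1}\leq\mu_{i}\leq\nu_{i}$ for all $i$, i.e.\ those $\nu$ with $\mu\in L_{\nu}$.

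Substituting into \prettyref{eq:RnHilbert}, interchanging the order of summation and collecting terms by the outer shape gives
\[
\sum_{\lambda}m_{\lambda}s_{\lambda}\;=\;\sum_{\mu\in\Lambda}a_{\mu}\sum_{\nu\,:\,\mu\in L_{\nu}}s_{\nu}\;=\;\sum_{\lambda}\left(\sum_{\mu\in L_{\lambda}}a_{\mu}\right)s_{\lambda},
\]
the regrouping being legitimate because for a fixed $\lambda$ the set $L_{\lambda}$ is finite (every $\mu\in L_{\lambda}$ satisfies $\mu\subseteq\lambda$), so each coefficient on the right is a finite sum. Both sides being symmetric polynomials in $t_{1},\dots,t_{n}$ for every $n$, I finish by comparing coefficients of Schur functions: by \prettyref{lem:schur} those $s_{\lambda}(t_{1},\dots,t_{n})$ with $h(\lambda)>n$ vanish while those with $h(\lambda)\leq n$ are linearly independent, so $m_{\lambda}=\sum_{\mu\in L_{\lambda}}a_{\mu}$ for all $\lambda$ with $h(\lambda)\leq n$; as $n$ may be taken arbitrarily large, this holds for every partition $\lambda$. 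I do not expect a genuine obstacle: past Pieri's rule the argument is pure bookkeeping, and the only point demanding a little care is checking that the interleaving inequalities defining $L_{\lambda}$ match precisely the combinatorial description of the horizontal strips $\mathcal{D}_{\lambda}\setminus\mathcal{D}_{\mu}$ (in particular, that they already force $\mu\subseteq\lambda$).
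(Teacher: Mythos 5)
Your proof is correct and follows essentially the same route as the paper: expand $s_{\mu}s_{(k)}$ by Pieri's rule, identify the horizontal-strip condition with the interleaving inequalities $\lambda_{i+1}\leq\mu_{i}\leq\lambda_{i}$ defining $L_{\lambda}$, and compare Schur coefficients in \prettyref{eq:RnHilbert}. Your extra remarks (finiteness of $L_{\lambda}$ and the linear-independence argument via \prettyref{lem:schur} for the coefficient comparison) only make explicit bookkeeping that the paper leaves implicit.
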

\begin{proof}
By Pieri's formula (see \cite{Macdonald}) $s_{\mu}s_{(k)}=\sum s_{\nu}$
where the sum is over all partitions $\nu$ obtained from $\mu$ by
adding $k$ boxes to its Young's diagram, no two of them in the same
column. We have to show that $s_{\lambda}$ appearing in the sum on
right hand side of $s_{\mu}s_{(|\lambda|-|\mu|)}=\sum s_{\nu}$ if
and only if $\lambda_{i+1}\leq\mu_{i}\leq\lambda_{i}$ for every $i$. 

Suppose that $s_{\lambda}$ appearing in the sum on the right hand
side of $s_{\mu}s_{(|\lambda|-|\mu|)}=\sum s_{\nu}$. It is obvious
that $\mu_{i}\leq\lambda_{i}$ for every $i$, since we are adding
boxes to $\mu$ in order to get $\lambda$. Let us assume that there
is an index $i$ such that $\mu_{i}<\lambda_{i+1}\leq\lambda_{i}$.
In the$\mu_{i}+1$ column, then, we necessarily added two boxes which
is a contradiction. (see Figure \ref{fig:tablux} )

\begin{figure}[h]
\centering{}\caption{\selectlanguage{english}%
\label{fig:tablux}\protect\includegraphics[scale=0.5]{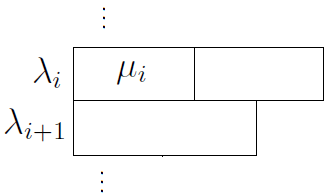}\selectlanguage{american}%
}
\end{figure}

For the other direction, let $\lambda$ and $\mu$ be partitions such
that $\lambda_{i+1}\leq\mu_{i}\leq\lambda_{i}$ for every $i$. Suppose
that in the $j$'s column we added two boxes in the $k$ 's and $i$'s
row where $k>i$. This implies that $\mu_{i}<j\leq\lambda_{k}\leq\lambda_{i+1}$
and this is a contradiction.
\end{proof}
The conclusion of the above proposition is that the coefficients $a_{\lambda}$
inherit two properties from the coefficients $m_{\lambda}$. Here
is the first one. 
\begin{cor}
There exists an integer $q$ such that $a_{\lambda}=0$ if $\lambda\notin\Lambda^{q}$.\end{cor}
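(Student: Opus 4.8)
The plan is to deduce this corollary directly from the arithmetic relation in \prettyref{prop:a-vr-m}, using the already-established fact (from \cite{Gordienko}, Lemma 1 and Lemma 7, applied to the finite dimensional algebra $A_{W}$) that $m_{\lambda}=0$ whenever $\lambda\notin\Lambda^{k}$, where $k=\dim_{F}(A_{W})$. So the claim is that $q=k$ works, i.e. $a_{\lambda}=0$ for every partition $\lambda$ with $h(\lambda)>k$.

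First I would argue by contradiction: suppose there is a partition $\mu$ with $h(\mu)=d>k$ and $a_{\mu}\ne 0$. The idea is to produce a partition $\lambda$ with $h(\lambda)>k$ for which $\mu\in L_{\lambda}$, and moreover such that $\mu$ is the \emph{only} element of $L_{\mu}$-type summands that could contribute a fixed minimal term — so that the coefficient $m_{\lambda}$ cannot vanish, contradicting $m_{\lambda}=0$. Concretely, the natural choice is $\lambda=\mu$ itself: the condition defining $L_{\lambda}$ is $\lambda_{i+1}\le\mu_{i}\le\lambda_{i}$, and taking $\lambda=\mu$ clearly satisfies $\mu\in L_{\mu}$. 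But $L_{\mu}$ may contain other partitions $\nu$ with $|\nu|<|\mu|$ (obtained by shrinking some rows), and those also have $a_{\nu}$ potentially nonzero, so I cannot immediately conclude $m_{\mu}=a_{\mu}\ne 0$ from a single instance. The fix is to look at the set of partitions of a given height: among all $\mu$ with $a_{\mu}\ne 0$ and $h(\mu)>k$, choose one with $|\mu|$ minimal. Then every $\nu\in L_{\mu}$ with $\nu\ne\mu$ either has $h(\nu)\le k$ (hence lies inside $\Lambda^{k}$ and there is no contradiction yet) or has $h(\nu)>k$ but $|\nu|<|\mu|$, whence $a_{\nu}=0$ by minimality — except we still need to kill the $h(\nu)\le k$ contributions.

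To handle that, the cleaner approach is to pick $\lambda$ more carefully than $\lambda=\mu$. Note $L_{\lambda}$ consists of partitions $\mu$ that "interlace" $\lambda$ from below: $\lambda_{i+1}\le\mu_{i}\le\lambda_{i}$. If $h(\mu)=d$, I want every $\nu\in L_{\lambda}$ to have height at least $d$ too, forcing all summands in $m_{\lambda}=\sum_{\nu\in L_{\lambda}}a_{\nu}$ to involve only tall partitions. This is achieved by taking $\lambda$ with $\lambda_{d+1}>0$, e.g. $\lambda_{d+1}=1$: then the interlacing condition $\lambda_{d+1}\le\mu_{d}$ forces $\mu_{d}\ge 1$, i.e. $h(\mu)\ge d$ for every $\mu\in L_{\lambda}$. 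So set $\lambda=(\mu_{1}+1,\mu_{2}+1,\dots,\mu_{d}+1,1)$ — or any partition refining this idea — so that $h(\lambda)=d+1>k$ while $L_{\lambda}$ contains $\mu$ and consists entirely of partitions of height $\ge d>k$; hence $m_{\lambda}=\sum_{\nu\in L_{\lambda}}a_{\nu}$ is a sum over partitions all of height $>k$. Choosing $\mu$ (hence $\lambda$) to minimize $|\mu|$ among all counterexamples of each fixed height, and then among those of minimal height $>k$, makes all other $\nu\in L_{\lambda}$ have $a_{\nu}=0$, so $m_{\lambda}=a_{\mu}\ne 0$, contradicting $m_{\lambda}=0$ (since $h(\lambda)>k$).

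The main obstacle is the bookkeeping in the last paragraph: making sure the chosen $\lambda$ simultaneously has height $>k$, contains $\mu$ in $L_{\lambda}$, and forces all \emph{other} members of $L_{\lambda}$ to have strictly smaller size (so minimality of $|\mu|$ applies) — one has to verify that no $\nu\in L_{\lambda}$ with $\nu\ne\mu$ has $|\nu|=|\mu|$, which follows because $\lambda$ is built to have $|\lambda|>|\mu|$ with a unique way to remove the "extra" boxes and land on height-$\ge d$ partitions, but this needs to be checked by a short direct argument on the interlacing inequalities. Once that combinatorial lemma is pinned down, the contradiction is immediate and $q=k=\dim_{F}(A_{W})$ is the desired bound. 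I expect this step to be routine but slightly fiddly; everything else is a direct quotation of \prettyref{prop:a-vr-m} and the Gordienko bound on $m_{\lambda}$.
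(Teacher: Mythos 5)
Your overall plan (combine \prettyref{prop:a-vr-m} with Gordienko's bound $m_{\lambda}=0$ for $h(\lambda)>k=\dim_{F}(A_{W})$, after enlarging $\mu$ to a taller $\lambda$ with $\mu\in L_{\lambda}$) is the right one and is essentially the paper's, but the execution has a genuine gap, and also a concrete error. The error first: your candidate $\lambda=(\mu_{1}+1,\dots,\mu_{d}+1,1)$ does not in general satisfy $\mu\in L_{\lambda}$, since the interlacing condition $\lambda_{i+1}\leq\mu_{i}$ reads $\mu_{i+1}+1\leq\mu_{i}$ and fails whenever $\mu$ has two equal consecutive parts (e.g.\ $\mu=(2,2)$ gives $\lambda=(3,3,1)$ and $\lambda_{2}=3>\mu_{1}=2$). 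The correct choice is simply $\lambda=(\mu_{1},\dots,\mu_{h(\mu)},1)$, for which $\lambda_{i+1}\leq\mu_{i}\leq\lambda_{i}$ holds trivially and $h(\lambda)=h(\mu)+1>k$.

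The real gap is the isolation step. You want to arrange, via a minimal-counterexample choice of $\mu$, that $m_{\lambda}=\sum_{\nu\in L_{\lambda}}a_{\nu}$ collapses to the single term $a_{\mu}$. This cannot be done: $L_{\lambda}$ always contains $\lambda$ itself (interlacing with equality), and more generally partitions of height $>k$ and size $\geq|\mu|$, so minimality of $|\mu|$ among counterexamples gives you no control over those summands --- $a_{\lambda}$ is exactly the kind of coefficient you are trying to bound, and no induction on size runs downward from it. The missing observation that makes the whole argument one line, and which the paper uses implicitly, is that the $a_{\nu}$ are \emph{nonnegative}: they are the multiplicities of irreducible polynomial $GL$-modules in $B_{n}$ (in the notation of \prettyref{thm:general}, $a_{\lambda}\in\mathbb{N}$). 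Hence from $m_{\lambda}=\sum_{\nu\in L_{\lambda}}a_{\nu}=0$ one gets $a_{\nu}=0$ for \emph{every} $\nu\in L_{\lambda}$, in particular $a_{\mu}=0$, with no need to isolate $a_{\mu}$ or choose $\mu$ minimal. With $\lambda=(\mu_{1},\dots,\mu_{h(\mu)},1)$ this yields $a_{\mu}=0$ for all $\mu\notin\Lambda^{\dim_{F}(A_{W})-1}$, i.e.\ one may take $q=\dim_{F}(A_{W})-1$.
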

\begin{proof}
Recall from \prettyref{sec:Preliminaries} that $m_{\lambda}=0$ if
$\lambda\notin\Lambda^{dim_{F}(A_{W})}$ where $A_{W}$ is the finite
dimensional algebra with the same $G$-graded identities as $W$. 

Let $\mu$ be a partition such that $h(\mu)>dim_{F}(A_{W})-1$ (i.e.
$\mu\notin\Lambda^{dim_{F}A_{W}-1})$, and define $\lambda=(\mu_{1},...,\mu_{h(\mu)},1).$
Since $h(\lambda)=h(\mu)+1>dim_{F}(A_{W})$ we know that $m_{\lambda}=\sum_{\nu\in L_{\lambda}}a_{\nu}=0$,
so $a_{\nu}=0$ for any $\nu\in L_{\lambda}$ . Moreover, it is easy
to see that $\lambda_{i+1}\leq\mu_{i}\leq\lambda_{i}$ for every $i$
thus $\mu\in L_{\lambda}$.
\end{proof}
By this corollary, and since $H_{B_{n}}(t_{1},...,t_{n})=\sum_{\lambda\in\Lambda^{n}}a_{\lambda}s_{\lambda}(t_{1},...,t_{n})$
for every $n$, we conclude that there is a single Hilbert series
(here it is $H_{B_{q}}(t_{1},...,t_{q})$) which determines all the
coefficients $a_{\lambda}$. The second corollary of \prettyref{prop:a-vr-m}
is that, as in $R_{n}^{G}(W)$ (see \prettyref{sec:Preliminaries}),
the series $H_{B_{q}}(t_{1},..,t_{q})$ satisfies the first assumption
of \prettyref{thm:general}. 
\begin{cor}
Let $H_{B_{q}}(t_{1},...,t_{q})=\sum_{\lambda}a_{\lambda}s_{\lambda}(t_{1},...,t_{q})$.
Then there is an integer $r$ such that $a_{\lambda}=0$ for every
partition $\lambda$ satisfying $\lambda_{r+1}\geq M$ where $M$
is a constant depend only on $r$. 

Moreover the integer $r=exp^{G}(W)-1$ is the minimal satisfying this
condition.\end{cor}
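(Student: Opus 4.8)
The plan is to show that the minimal $r$ in question equals the exponent of the auxiliary sequence $\delta_{s}=dim_{F}B_{s}^{(1^{s})}$ from \prettyref{cor:OfDrensky}, and then to compute that exponent from the binomial relation $c_{n}^{G}(W)=\sum_{s=0}^{n}\binom{n}{s}\delta_{s}$, which is also part of \prettyref{cor:OfDrensky}.

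First I would check that \prettyref{thm:general} applies to $H_{B_{q}}(t_{1},\dots,t_{q})=\sum_{\lambda}a_{\lambda}s_{\lambda}(t_{1},\dots,t_{q})$. Condition $1$ is cheap: by \prettyref{prop:a-vr-m} one has $m_{\lambda}=\sum_{\mu\in L_{\lambda}}a_{\mu}$, and since $\lambda\in L_{\lambda}$ and all $a_{\mu}\geq0$ this gives $0\leq a_{\lambda}\leq m_{\lambda}$ for every $\lambda$; hence the vanishing $m_{\lambda}=0$ for $\lambda_{r_{1}+1}\geq M_{1}$ (the first condition of \prettyref{thm:general} for $H_{W}$, established in \prettyref{sec:Affine}) forces $a_{\lambda}=0$ for the same partitions, so condition $1$ holds for $H_{B_{q}}$ too. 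Condition $2$: the factorization $H_{R_{q}^{G}(W)}=H_{F[z_{1},\dots,z_{q}]}\cdot H_{B_{q}}$ obtained after \prettyref{cor:OfDrensky} gives $H_{B_{q}}=H_{R_{q}^{G}(W)}\cdot\prod_{i=1}^{q}(1-t_{i})$; writing the nice rational function $H_{R_{q}^{G}(W)}$ (\prettyref{sec:Affine}) as $f(t)/\prod_{\alpha\in\Delta}(1-t_{1}^{\alpha_{1}}\cdots t_{q}^{\alpha_{q}})$ shows $H_{B_{q}}$ is again a polynomial over $\prod_{\alpha\in\Delta}(1-t_{1}^{\alpha_{1}}\cdots t_{q}^{\alpha_{q}})$, i.e.\ nice rational. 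Thus \prettyref{thm:general} applies to $H_{B_{q}}$ and yields an integer $d$, constants $\beta_{m}\in\frac{1}{2}\mathbb{Z}$ and $\alpha_{m}\geq0$ for $0\leq m\leq d-1$, and an integer $\ell$ --- by construction the minimal integer for which condition $1$ holds for $H_{B_{q}}$, that is, exactly the number named in the corollary --- with $\delta_{m+td}\sim\alpha_{m}(m+td)^{\beta_{m}}\ell^{\,m+td}$ as $t\to\infty$.

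It remains to identify $\ell$ with $exp^{G}(W)-1$; for this I compare the exponents of the two sequences through $c_{n}^{G}(W)=\sum_{s=0}^{n}\binom{n}{s}\delta_{s}$. I first record that $\limsup_{s}\sqrt[s]{\delta_{s}}=\ell$: for $\ell=0$ this is clear (the formulas above force $\delta_{s}=0$ for $s$ large); for $\ell\geq1$, minimality of $\ell$ means condition $1$ fails at $r=\ell-1$, so for every $M$ there is a partition $\lambda$ with $a_{\lambda}\neq0$ and $\lambda_{\ell}\geq M$, whence $\delta_{|\lambda|}\geq a_{\lambda}d_{\lambda}\geq1$ and $|\lambda|\geq M$, so $\delta_{s}>0$ for infinitely many $s$; those $s$ meet some class $m_{0}\pmod{d}$ infinitely often, forcing $\alpha_{m_{0}}>0$ and hence $\sqrt[m_{0}+td]{\delta_{m_{0}+td}}\to\ell$, while the reverse inequality $\limsup_{s}\sqrt[s]{\delta_{s}}\leq\ell$ is immediate. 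Now for the upper estimate, given $\varepsilon>0$ we have $\delta_{s}\leq(\ell+\varepsilon)^{s}$ for $s\geq s_{0}$ and $\delta_{s}\leq D$ for $s<s_{0}$, so $c_{n}^{G}(W)\leq Ds_{0}n^{s_{0}}+\sum_{s=0}^{n}\binom{n}{s}(\ell+\varepsilon)^{s}=Ds_{0}n^{s_{0}}+(1+\ell+\varepsilon)^{n}$, giving $\limsup_{n}\sqrt[n]{c_{n}^{G}(W)}\leq1+\ell+\varepsilon$ for every $\varepsilon$. For the lower estimate, choose $s_{j}\to\infty$ with $\delta_{s_{j}}\geq(\ell-\varepsilon)^{s_{j}}$ and put $n_{j}=\lceil\frac{1+\ell}{\ell}s_{j}\rceil$, so that $s_{j}$ lies within $1$ of the value of $s$ maximizing $\binom{n_{j}}{s}\ell^{s}$, hence $\binom{n_{j}}{s_{j}}\ell^{s_{j}}\geq(1+\ell)^{n_{j}}/\mathrm{poly}(n_{j})$; then $c_{n_{j}}^{G}(W)\geq\binom{n_{j}}{s_{j}}\delta_{s_{j}}\geq(1+\ell)^{n_{j}}(1-\varepsilon/\ell)^{s_{j}}/\mathrm{poly}(n_{j})$, and letting $j\to\infty$ and then $\varepsilon\to0$ gives $\limsup_{n}\sqrt[n]{c_{n}^{G}(W)}\geq1+\ell$. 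Since $exp^{G}(W)=\lim_{n}\sqrt[n]{c_{n}^{G}(W)}$ exists (graded exponent theorem \cite{A+G+L}), we get $exp^{G}(W)=1+\ell$, i.e.\ $\ell=exp^{G}(W)-1$ --- which is the assertion, the existence part being the case $r=\ell$. (When $\ell=0$ the upper estimate gives $exp^{G}(W)\leq1$, and $c_{n}^{G}(W)\geq\delta_{0}=1$ gives $exp^{G}(W)\geq1$, so again $exp^{G}(W)=1=\ell+1$.)

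The step needing the most care is the application of \prettyref{thm:general} to $H_{B_{q}}$: one must be sure the integer $\ell$ it produces is literally the minimal $r$ of the corollary, and must handle the degenerate configurations --- $\ell=0$, or some $\alpha_{m}=0$ --- which is precisely why the argument uses the minimality of $\ell$ to force $\delta_{s}\neq0$ infinitely often and hence some $\alpha_{m}>0$. The binomial estimate itself (placing $\binom{n_{j}}{s_{j}}$ near its maximum while keeping $s_{j}$ in a favourable residue class modulo $d$) is routine.
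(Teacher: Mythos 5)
Your argument is correct in substance, but it reaches the conclusion by a genuinely different route than the paper. The paper's own proof is purely combinatorial and local: it uses Proposition \prettyref{prop:a-vr-m} together with Gordienko's two\-/sided strip theorem for the $m_{\lambda}$'s (with $l=exp^{G}(W)$). For the vanishing at $r=l-1$ it takes a partition $\mu$ with $\mu_{l}\geq M$, duplicates its $l$-th row to form $\lambda=(\mu_{1},...,\mu_{l},\mu_{l},\mu_{l+2},...)$, notes $\lambda_{l+1}=\mu_{l}\geq M$ so $m_{\lambda}=\sum_{\nu\in L_{\lambda}}a_{\nu}=0$, and observes $\mu\in L_{\lambda}$, forcing $a_{\mu}=0$; for minimality it picks $\lambda$ with $\lambda_{l}\geq p$ and $m_{\lambda}\neq0$ and extracts $\mu\in L_{\lambda}$ with $\mu_{l-1}\geq\lambda_{l}$. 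You instead extract from \prettyref{prop:a-vr-m} only the cheap inequality $a_{\lambda}\leq m_{\lambda}$ (giving condition 1 with some $r$), and then pin down the minimal $r$ analytically: apply \prettyref{thm:general} to $H_{B_{q}}$, feed the resulting growth of $\delta_{s}$ through the binomial identity of \prettyref{cor:OfDrensky}, and match against the known existence and integrality of $exp^{G}(W)=\lim_{n}\sqrt[n]{c_{n}^{G}(W)}$ --- in effect running the paper's subsequent asymptotic computation in reverse, at the level of exponential growth rates. What the paper's route buys is brevity and independence from the exponent theorem (beyond Gordienko's identification of the strip width with $exp^{G}(W)$), plus an explicit quantitative statement ($a_{\mu}=0$ whenever $\mu_{l}\geq M$); what your route buys is that you never need the sharp (minimality) half of Gordienko's result for the $m_{\lambda}$'s, at the cost of invoking heavier machinery and the external graded\-/exponent theorem. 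Two small points to tighten: the inference ``infinitely many nonzero $\delta_{s}$ in a residue class forces $\alpha_{m_{0}}>0$'' (and likewise the ``immediate'' bound $\limsup_{s}\sqrt[s]{\delta_{s}}\leq\ell$) depends on reading the conclusion of \prettyref{thm:general} with $\alpha_{m}=0$ as ``eventually zero along that class,'' which should be said explicitly; and the lower binomial estimate should note that the ratio of consecutive terms of $\binom{n}{s}\ell^{s}$ is bounded near the maximizing $s$, so shifting $s_{j}$ by a bounded amount only costs a constant factor. Neither point affects correctness.
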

\begin{proof}
From \cite{Gordienko} there exists an integer $M$ such that $m_{\lambda}=0$
for every partition $\lambda$ satisfies $\lambda_{l+1}\geq M$ where
$l=exp^{G}(W).$ Suppose that $\mu$ is a partition such that $\mu_{l}\geq M$.
We define a partition $\lambda=(\mu_{1},...,\mu_{l},\mu_{l},\mu_{l+2},...,\mu_{h(\mu)})$.
Obviously, $\lambda_{l+1}=\mu_{l}\geq M$, so $m_{\lambda}=\sum_{\nu\in L_{\lambda}}a_{\nu}=0$,
so $a_{\nu}=0$ for any $\nu\in L_{\lambda}$. Moreover, one can check
that $\lambda_{i+1}\leq\mu_{i}\leq\lambda_{i}$ for every $i$ so
$\mu\in L_{\lambda}$. 

To show the minimality of $l-1$, we have to show that for every integer
$p$ there is a partition $\mu$ with $\mu_{l-1}\geq p$ such that
$a_{\mu}\ne0$. We know from \cite{Gordienko} that for every integer
$p$ there is a partition $\lambda$ with $\lambda_{l}\geq p$ such
that $m_{\lambda}=\sum_{\nu\in L_{\lambda}}a_{\nu}\ne0$. Hence, there
is $\mu\in L_{\lambda}$, which satisfies $\mu_{l-1}\geq\lambda_{l}\geq p$,
such that $a_{\mu}\ne0$. 
\end{proof}
The Hilbert series $H_{B_{q}}$ is also nice rational because $H_{R_{q}^{G}(W)}$
is a nice rational function, and since \ref{eq:HilbertRat} imply
that 
\[
H_{B_{q}}(t_{1},...,t_{q})=\prod_{i=1}^{q}(1-t_{i})H_{R_{q}^{G}(W)}(t_{1},...,t_{q}).
\]
We get that the conclusion of \prettyref{thm:general} holds for the
series $\{\delta_{s}\}_{s\in\mathbb{N}}$, and together with \prettyref{cor:OfDrensky}
\[
c_{n}^{G}(W)\sim\sum_{m=0}^{d-1}\,\,\,\sum_{\begin{array}{c}
0\leq s\leq n\\
s=m\,(mod\, d)
\end{array}}{n \choose s}\alpha_{m}s^{\beta_{m}}(l-1)^{s}
\]
where $\beta_{m}\in\frac{1}{2}\mathbb{Z}$ for every $m$. To estimate
this sum, let $\omega$ be a primitive $d^{th}$ root of unit and
note that $\sum_{t=0}^{d-1}\omega^{(s-m)t}$ is zero if $s\ne m\,(mod\, d)$,
and is $d$ if $s=m\,(mod\, d)$. Thus,
\[
c_{n}^{G}(W)\sim\sum_{m=0}^{d-1}\,\,\,\sum_{\begin{array}{c}
0\leq s\leq n\\
s=m\,(mod\, d)
\end{array}}{n \choose s}\alpha_{m}s^{\beta_{m}}(l-1)^{s}
\]
\[
=\frac{1}{d}\sum_{m=0}^{d-1}\sum_{s=0}^{n}\sum_{t=0}^{d-1}\omega^{(s-m)t}{n \choose s}\alpha_{m}s^{\beta_{m}}(l-1)^{s}
\]
\[
=\sum_{m=0}^{d-1}\sum_{t=0}^{d-1}\frac{\alpha_{m}}{d}\omega^{-mt}\sum_{s=0}^{n}{n \choose s}s^{\beta_{m}}\left(\omega^{t}(l-1)\right)^{s}
\]
By lemma 1.1 in \cite{Beckner&Regev}, for every $m$ and $t$ the
expression $\sum_{s=0}^{n}{n \choose s}s^{\beta_{m}}\left(\omega^{t}(l-1)\right)^{s}$
asymptotically equal to:

\[
\mu_{m,t}n^{\beta_{m}}\left(\omega^{t}(l-1)+1\right)^{n}
\]
where $\mu_{m,t}$ is a constant which does not depend on $n$. Moreover,
the absolute value of the expression $\omega^{t}(l-1)+1$ is maximal
when $t=0$, so
\[
c_{n}^{G}(W)\sim\alpha n^{\beta}l^{n}
\]
where $\beta=max(\beta_{m})\in\frac{1}{2}\mathbb{Z}$. And theorem
\nameref{thm:B} is proven. 
\selectlanguage{english}%


\begin{thebibliography}{AB1}
\bibitem[AB1]{Aljadeff & Belov1}E. Aljadeff and A. Kanel-Belov, Hilbert
series of PI relatively free G-graded algebras are rational functions.
Bull. Lond. Math. Soc. 44 (2012), no. 3, 520\textendash{}532.

\bibitem[AB2]{Aljadeff & Belov2}E. Aljadeff and A. Kanel-Belov, \textquoteleft{}Representability
and Specht problem for G-graded algebras\textquoteright{}, Adv. Math.
225 (2010) 2391\textendash{}2428

\bibitem[AG]{key-A+G}E. Aljadeff and A. Giambruno, Multialternating
graded polynomials and growth of polynomial identities, to appear
in Proc. Amer. Math. Soc.

\bibitem[AGL]{A+G+L}E. Aljadeff, A. Giambruno and D. La Mattina,
Graded polynomial identities and exponential growth, J. Reine Angew.
Math. 650 (2011), 83--100.

\bibitem[BeR]{Beckner&Regev}\foreignlanguage{american}{W.Beckner
and A.Regev, Asymptotic estimates using probability. Adv. Math. 138
(1998), no. 1, 1\textendash{}14.}

\bibitem[BRo]{Belov&Rowen book}\foreignlanguage{american}{A. Kanel-Belov
and L.H. Rowen, Computational aspects of polynomial identities. Research
Notes in Mathematics, 9. A K Peters, Ltd., Wellesley, MA, 2005.}

\bibitem[BR]{Berele & Regev}A.Berele, and A.Regev, Asymptotic behaviour
of codimensions of p. i. algebras satisfying Capelli identities. Trans.
Amer. Math. Soc. 360 (2008), no. 10, 5155\textendash{}5172.

\bibitem[Bru]{key-Bruce} Bruce .E. Sagan, The Symmetric Group: Representations,
Combinatorial Algorithms and Symmetric Functions, Section 4.4, 155-157

\bibitem[D]{Drensky}V. Drensky, Codimensions of T-ideals and Hilbert
series of relatively free algebras, J. of Alg. 89 (1984), 178\textendash{}223.
MR765766 (86b:16010)

\bibitem[GL]{key-G+L}A. Giambruno and D. La Mattina, Graded polynomial
identities and codimensions: computing the exponential growth. Adv.
Math. 225 (2010), no. 2, 859\textendash{}881. 

\bibitem[GZ1]{Giambruno&Zaicev} A. Giambruno and M. V. Zaicev, On
codimension growth of finitely generated associative algebras, Adv.
Math. 140 (1998), 145\textendash{}155.

\bibitem[GZ2]{Gia & Zai book}Giambruno, Antonio; Zaicev, Mikhail,
Polynomial identities and asymptotic methods. Mathematical Surveys
and Monographs, 122. American Mathematical Society, Providence, RI,
2005.

\bibitem[GZ3]{Gia&Za nondecreas} A.Giambruno; M.V.Zaicev, Growth
of polynomial identities: Is the sequence of codimensions eventually
nondecreasing? Bull. Lond. Math. Soc. 46 (2014), no. 4, 771\textendash{}778.

\bibitem[Go]{Gordienko}A. S.Gordienko, Amitsur's conjecture for associative
algebras with a generalized Hopf action. J. Pure Appl. Algebra 217
(2013), no. 8, 1395\textendash{}1411.

\bibitem[M]{Macdonald} I. G.Macdonald, (1995), Symmetric functions
and Hall polynomials, Oxford Mathematical Monographs (2nd ed.), The
Clarendon Press Oxford University Press

\bibitem[St]{Stanly}R. Stanley, Combinatorics and Commutative Algebra,
Birkhäuser, Boston, 1983.

\bibitem[Stu]{Sturmfels} B. Sturmfels, On vector partition functions,
J. Combin. Theory Ser. A 72 (1995) 302\textendash{}308.

\bibitem[Sv]{Sviridova}I.Sviridova, Identities of pi-algebras graded
by a finite abelian group. (English summary) Comm. Algebra 39 (2011),
no. 9, 3462\textendash{}3490. \end{thebibliography}
\end{document}